\newcommand{\cN}{{\cal N}}
\newcommand{\cO}{{\cal O}}
\newcommand{\sV}{{\cal V}}
\newcommand{\sQ}{{\cal Q}}
\newcommand{\sK}{{\cal K}}
\newcommand{\sM}{{\cal M}}
\newcommand{\sS}{{\cal S}}
\newcommand{\uu}{{\underline{u}{}}}
\newcommand{\ulambda}{{\underline{\lambda}{}}}
\newcommand{\uA}{{\underline{A}{}}}
\newcommand{\uB}{{\underline{B}{}}}
\newcommand{\uc}{{\underline{c}{}}}
\newcommand{\uf}{{\underline{f}{}}}
\newcommand{\ug}{{\underline{g}{}}}
\newcommand{\us}{{\underline{s}{}}}
\newcommand{\uv}{{\underline{v}{}}}
\newcommand{\uuu}{\underline{\underline{u}{}}}
\newcommand{\uus}{\underline{\underline{s}{}}}
\newcommand{\uulambda}{\underline{\underline{\lambda}{}}}
\newcommand{\uuv}{\underline{\underline{v}{}}}
\newcommand{\uuA}{\underline{\underline{A}{}}}
\newcommand{\uuB}{\underline{\underline{B}{}}}
\newcommand{\uuf}{\underline{\underline{f}{}}}
\newcommand{\uug}{\underline{\underline{g}{}}}
\newcommand{\usK}{\underline{{\sK}{}}}
\newcommand{\usS}{\underline{{\sS}{}}}
\newcommand{\tA}{{\tilde{A}{}}}
\newcommand{\tf}{{\tilde{f}{}}}
\newcommand{\tQ}{{\tilde{Q}{}}}
\newcommand{\tTheta}{{\tilde{\Theta}{}}}
\newcommand{\utA}{\underline{\tilde{A}{}}}
\newcommand{\uutA}{\underline{\underline{\tilde{A}}}{}}
\newcommand{\utf}{\underline{\tilde{f}{}}}
\newcommand{\uutf}{\underline{\underline{\tilde{f}}}{}}
\newcommand{\uxi}{{\underline{\xi}{}}}
\newcommand{\uvarphi}{{\underline{\varphi}{}}}
\newcommand{\upsi}{{\underline{\psi}{}}}
\newcommand{\usigma}{{\underline{\sigma}{}}}
\newcommand{\uzeta}{{\underline{\zeta}{}}}
\newcommand{\ang}[1]{\langle #1\rangle}
\newcommand{\RcNV}{\mathbb{R}^{\cN_\sV}}
\newcommand{\RcNQ}{\mathbb{R}^{\cN_\sQ}}
\newcommand{\RcnV}{\mathbb{R}^{n_\sV}}
\newcommand{\RcnQ}{\mathbb{R}^{n_\sQ}}
\newcommand{\pr}{{\rm pr}}
\newcommand{\du}{{\rm du}}
\newcommand{\Ppr}{{\bf P$^{\rm pr}$}}
\newcommand{\Pprn}{{\bf P{}$^{\rm pr}_n$}}
\newcommand{\uPpr}{\underline{\bf P}${}^{\rm pr}$}
\newcommand{\uuPprn}{\underline{{\underline{\bf P}}}{}$^{\rm pr}_n$}
\newcommand{\Pdu}{{\bf P$^{\rm du}$}}
\newcommand{\uPdu}{\underline{\bf P}${}^{\rm du}$}
\newcommand{\uPdun}{\underline{\bf P}{}$^{\rm du}_n$}
\newcommand{\uuPdun}{{\underline{\underline{\bf P}}}{}$^{\rm du}_n$}
\newcommand{\cD} {{\cal D}}
\def\problem[#1]{{\par{\sc Problem {\bf \rm #1}.}\ignorespaces}}
\title{A Duality Approach to Error Estimation for Variational Inequalities
\thanks{This work was
supported by the Deutsche Forschungsgemeinschaft (German Research Foundation) through grant GSC 111.}} 
\author{
Z.~Zhang
\and E.~Bader
\and K.~Veroy\footnotemark[2]
}
\begin{document}
\maketitle
\slugger{mms}{xxxx}{xx}{x}{x--x}%

\footnotetext[2]{Aachen Institute for Advanced Study in Computational Engineering Science (AICES) Graduate School, Schinkelstrasse 2, 52056 Aachen, Germany}
\begin{abstract}
Motivated by problems in contact mechanics, we propose a duality approach for computing approximations and associated a posteriori error bounds to solutions of variational inequalities of the first kind.  The proposed approach improves upon existing methods introduced in the context of the reduced basis method in two ways.  First, it provides sharp {\it a posteriori} error bounds which mimic the rate of convergence of the RB approximation.  Second, it enables a {\it full} offline-online computational decomposition in which the online cost is completely independent of the dimension of the original (high-dimensional) problem.  Numerical results comparing the performance of the proposed and existing approaches illustrate the superiority of the duality approach in cases where the dimension of the full problem is high.  
\end{abstract}

\begin{keywords}{model order reduction, reduced basis method, variational inequalities, slack variable, offline-online decomposition, a posteriori error estimation, obstacle problem, contact}
\end{keywords}

\begin{AMS}{35J86, 65K15, 65N15, 90C33} \end{AMS}

\pagestyle{myheadings}
\thispagestyle{plain}
\markboth{Z.~ZHANG, E.~BADER, AND K.~VEROY}{DUALITY APPROACH TO ERROR ESTIMATION FOR VARIATIONAL INEQUALITIES\ \ }

\section{Introduction}
\label{sec:intro}

We present an efficient model order reduction method for parametrized elliptic variational inequalities of the first kind.  Motivated by numerous engineering applications that involve contact between elastic bodies, we develop a primal-dual reduced basis (RB) approach to constructing online-inexpensive yet certified reduced order models.  Such models find application in the real-time or many query context of PDE-constrained optimisation, control, or parameter estimation.   

The use of the certified RB approach for  variational inequalities (VIs) has been explored in \cite{HSW2012} for elliptic problems, and in \cite{Burkovska2014},  \cite{Glas2013}, and  \cite{Haasdonk2012},  for parabolic problems.  However, we foresee that two aspects of the approach presented in \cite{HSW2012} (upon which \cite{Burkovska2014},  \cite{Glas2013}, and  \cite{Haasdonk2012} are based) will likely cause difficulties when considering  high-dimensional problems in contact mechanics.  In particular, the approach in \cite{HSW2012} ({\it i\/}) provides only a {\it partial} offline/online decomposition, i.e., the online  cost to compute the {\it a posteriori} error bounds depends on the dimension of the finite element solution; and ({\it ii\/}) the convergence rate of the {\it a posteriori} error bounds does not mimic the actual convergence rate of the approximation.  

In this work, we develop a certified RB method that provides sharper {\it and} inexpensive {\it a posteriori} error bounds.  In particular, our primal-dual approach not only ({\it i\/}) provides sharp error bounds that mimic the  convergence rate of the RB approximation, but also  ({\it ii}) does so at an online cost that is independent of the high dimension of the original problem.  We illustrate these claims using two model problems.  

The paper is organized as follows. In Sec.~2 we present different abstract formulations of our problem and state known theoretical results regarding existence and uniqueness of solutions.  In Sec.~3 we summarise the approximation and {\it a posteriori} error estimation approach presented in \cite{HSW2012}; in this paper we shall refer to this as the {\it primal-only} approach.  In Sec.~4 we present our proposed {\it primal-dual} approach, in which an additional problem expressed in terms of the slack variable is introduced.  To emphasize the generality of the approach, as well as for purposes of clarity, we delay until Sec.~5 the introduction of the parametrized problem and the application of the above-mentioned approaches to the RB method. The model problems and corresponding numerical results are then presented in Sec.~6. 

\section{Problem Statement} 
\label{sec:ProbStatement}

We consider several different formulations of our problem: a minimization statement, a general (standard) variational inequality, a mixed formulation, and a mixed complementarity problem.  Since, in practice, we typically consider finite dimensional approximations, we shall also present the corresponding algebraic formulation.

\subsection{Preliminaries} 

Let $\sV$ and $\sQ \subset L^2(\Omega)$ be two separable Hilbert spaces, with inner products $(\cdot,\cdot)_{\sV}$, $(\cdot,\cdot)_{\sQ},$ and associated norms $\|\cdot\|_{\sV} = \sqrt{(\cdot,\cdot)_{\sV}}$, $\|\cdot\|_{\sQ}=\sqrt{(\cdot,\cdot)_{\sQ}}$, respectively.  Here,  $\Omega \subset \mathbb{R}^d, d=1,2,3,$ is a bounded Lipschitz domain.   The corresponding dual spaces are denoted by $\sV'$ and $\sQ'$; we further denote a general duality pairing as $\langle \cdot, \cdot \rangle$.

Let $a(\cdot,\cdot):\sV \times \sV \to \mathbb{R}$ be a continuous, coercive bilinear form, and let $A :\sV \rightarrow \sV'$ be the induced linear map,
$\ang{Av,w} = a(v,w), \ \forall \;v,w \in \sV.$
Note that $a(\cdot,\cdot)$ is not necessarily symmetric.  
We then define the continuity and coercivity constants as 
\begin{equation}
\label{eq:alpha-gamma}
\gamma  \equiv \sup_{w\in \sV} \sup_{v\in \sV} 
\frac{\ang{Aw,v}}{\|w\|_{\sV}\|v\|_{\sV}} < \infty, 
\qquad \alpha \equiv \inf_{v\in \sV} \frac{\ang{Av,v}}{\|v\|^2_{\sV}} > 0.
\end{equation}
We further introduce a linear functional $f \in \sV'$. 

\subsection{Variational Inequality}
 
The study of variational inequalities has its origins in \cite{Fichera1963}.  We consider here VIs of the {\it first} kind, i.e., VIs that are posed on convex subsets.  We thus let $\sK$ be a non-empty closed convex subset of $\sV$ and state the abstract form of a VI of the first kind.  
\begin{problem}[\bf{A1}]  {\it Find $u \in \sK$ such that
\begin{equation} 
\label{eq:vi}
\ang{Au,v -u} \geq \ang{f,v-u}, \quad\forall\;v\in \sK.
\end{equation}}
\noindent
In cases in which the bilinear form $a(\cdot,\cdot)$ is symmetric, the variational inequality (\ref{eq:vi}) is equivalent to the constrained minimization problem
$$u = \arg \min_{v \in \sK} \textstyle\frac12 \ang{Av,v} - \ang{f,v}.$$
For more details on the equivalence of the two formulations in the symmetric case, we refer the reader to \cite{Kikuchi1988}.  We now (re-)state a well-known result on the existence and uniqueness of a solution to {\bf A1}.

\end{problem}

\begin{theorem}
\label{thm:LS67}
{\bf [Lions \& Stampacchia, 1967]} Let $A:\sV \to \sV'$ satisfy (\ref{eq:alpha-gamma}) and let $\sK$ be a non-empty closed convex set of $\sV$. There exists a unique solution of Problem~{\bf A1}. Furthermore, the map $f \to u$ (generally nonlinear) is continuous from $\sV'$ into $\sV$,
\end{theorem}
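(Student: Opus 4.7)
The plan is to follow the classical Lions--Stampacchia route. Uniqueness and continuous dependence on $f$ will follow from a short test-function calculation, and existence will be reduced to a fixed-point equation to which Banach's theorem applies.

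First I would address uniqueness and the Lipschitz estimate simultaneously. Let $u_1, u_2 \in \sK$ be solutions corresponding to data $f_1, f_2 \in \sV'$ (with existence assumed for the purpose of this part of the argument). Testing the VI for $u_1$ with $v = u_2$ and the VI for $u_2$ with $v = u_1$, and then adding the two inequalities, the cross terms cancel and I obtain
\begin{equation*}
\ang{A(u_1 - u_2), u_1 - u_2} \leq \ang{f_1 - f_2, u_1 - u_2}.
\end{equation*}
Coercivity on the left and the duality bound on the right give $\|u_1 - u_2\|_\sV \leq \alpha^{-1} \|f_1 - f_2\|_{\sV'}$. Setting $f_1 = f_2$ yields uniqueness; keeping the data distinct yields Lipschitz continuity of the solution map, which is in fact stronger than the continuity claimed.

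For existence I would recast Problem~\textbf{A1} as a fixed-point equation. Let $R: \sV' \to \sV$ be the Riesz isomorphism and let $P_\sK: \sV \to \sK$ denote the metric projection, which is well defined and non-expansive because $\sK$ is non-empty, closed, and convex. For a parameter $\rho > 0$, define $T_\rho: \sK \to \sK$ by $T_\rho(w) = P_\sK\bigl( w - \rho R(Aw - f) \bigr)$. The variational characterization of the projection, $(z - P_\sK z, v - P_\sK z)_\sV \leq 0$ for all $v \in \sK$, shows that $u = T_\rho(u)$ is equivalent to $\ang{Au, v-u} \geq \ang{f, v-u}$ for all $v \in \sK$, so it suffices to produce a fixed point of $T_\rho$. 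For $w_1, w_2 \in \sK$, non-expansiveness of $P_\sK$ together with $\|RAw\|_\sV \leq \gamma \|w\|_\sV$ (continuity) and $(RAw,w)_\sV \geq \alpha \|w\|_\sV^2$ (coercivity) yields
\begin{equation*}
\|T_\rho(w_1) - T_\rho(w_2)\|_\sV^2 \leq \bigl(1 - 2\rho\alpha + \rho^2 \gamma^2\bigr)\,\|w_1 - w_2\|_\sV^2,
\end{equation*}
which is a strict contraction whenever $\rho \in (0, 2\alpha/\gamma^2)$. Since $\sK$ is a closed subset of the Hilbert space $\sV$, and therefore complete, Banach's fixed-point theorem delivers a unique $u \in \sK$ with $u = T_\rho(u)$, and this $u$ solves Problem~\textbf{A1}.

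The main obstacle is the existence step: choosing the Riesz-preconditioned projection reformulation and identifying the admissible range of $\rho$ for which the contraction constant is strictly below one. Once $R$ and $P_\sK$ are in place, the rest is essentially a one-line expansion using coercivity and continuity, and the uniqueness/Lipschitz part is almost immediate from the test-function trick.
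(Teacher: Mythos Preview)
Your argument is correct and is precisely the classical Lions--Stampacchia proof: the test-function trick for uniqueness and Lipschitz dependence, and the Banach fixed-point argument on $T_\rho = P_\sK \circ (I - \rho R A + \rho R f)$ for existence. The paper itself does not give a proof at all; it simply refers the reader to \cite{Lions1967}, so there is nothing to compare beyond noting that what you have written is the standard proof one would find in that reference.
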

\begin{proof}
We refer the reader to \cite{Lions1967} for the proof.
\end{proof}

\subsection{Mixed Formulation} 

We now introduce a proper positive cone $\sM$ of the space $\sQ$ and a corresponding positive cone $\sM'$ in the dual space $\sQ'$ defined as 
\begin{subequations}
\begin{eqnarray}
\sM & := & \{ \ q \in Q\phantom{'} \; | \; q \geq 0 \mbox{ a.e. in } \Omega\ \}, \\
\label{eq:cone}
\sM' & := & \{ \ \zeta \in \sQ' \; | \; \langle \zeta, q \rangle \geq 0, \quad \forall \ q \in \sM\ \}.
\end{eqnarray}
\end{subequations}
For more details on the above concepts, we refer the reader to \cite{Glowinski1984} and \cite{Kikuchi1988}. 

We also assume that the convex set $\sK$ is given by 
\begin{equation}
\sK := \left\{ v \in V \; | \; \ang{Bv,q} \leq \ang{g,q},\ \forall \; q \in \sM \right\},
\end{equation}
where $B: \sV \to \sQ' $ is the induced linear map of a continuous bilinear form $b(\cdot,\cdot):\sV \times \sQ \to \mathbb{R}$,
$
\ang{Bv,q} = b(v,q), \quad \forall \;v \in \sV, \ q \in \sQ; 
$
and $g$ is a bounded linear functional, $g \in \sQ'$.  In Sec.~\ref{sec:duality}, we shall also assume that $B$ is bijective, so that $B^{-1}$ is well-defined.  

We now consider the following mixed formulation of our variational inequality:
\begin{problem}[\bf{A2}]
\label{pr:mixed}
{\it Find $(u,\lambda) \in \sV \times \sM$ such that
\begin{subequations}
\label{eq:spp}
\begin{alignat} {6}
\label{eq:spp1}
\ang{Au,v}  + \ang{Bv, \lambda} & = & \ &  \ang{f,v},  & & \quad\forall\;v\in \sV \\
\label{eq:spp2}
\ang{Bu,q-\lambda} & \leq & \ & \ang{g,q-\lambda}, & & \quad\forall\;q\in \sM
\end{alignat}
\end{subequations}}
~\\[-2ex]
\end{problem}
\noindent We then summarise some results on the existence, uniqueness, and boundedness of the solution to {\bf A2}.
{\begin{theorem}
\label{thm:BHR78}
Let $A:\sV \to \sV'$ satisfy (\ref{eq:alpha-gamma}) and let $\sK$ be a non-emptyclosed convex set of $\sV$.  Suppose further that there exists a constant $\beta_0>0$ such that
\begin{equation} 
\label{eq:beta_b}
\beta \equiv \inf_{q\in \sQ}  \sup_{v\in \sV} \frac{\ang{Bv,q}}{\|q\|_{\sQ}\|v\|_{\sV}} \geq \beta_0 > 0.
\end{equation}
Then {\bf A2} has a unique solution.  Furthermore, if $(u,\lambda)$ solves {\bf A2}, then $u$ solves {\bf A1}.  
\end{theorem}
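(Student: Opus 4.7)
The plan is to follow the classical Brezzi--Hager--Raviart framework, splitting the claim into (i) the implication A2 $\Rightarrow$ A1 (which together with Theorem~\ref{thm:LS67} delivers uniqueness of the $u$-component), and (ii) construction of a multiplier $\lambda$ extending the unique $u$ of A1 to a solution of A2.

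For (i) the argument rests only on the cone structure of $\sM$. Given $(u,\lambda) \in \sV \times \sM$ solving A2, the choices $q = 0$ and $q = 2\lambda$ in (\ref{eq:spp2}) --- both admissible because $\sM$ is a convex cone --- yield the complementarity identity $\ang{Bu - g, \lambda} = 0$, and what remains of (\ref{eq:spp2}) forces $\ang{Bu, q} \leq \ang{g, q}$ for all $q \in \sM$, i.e.\ $u \in \sK$. For any $v \in \sK$, testing (\ref{eq:spp1}) at $v - u$ and noting $\ang{B(v - u), \lambda} = \ang{Bv, \lambda} - \ang{g, \lambda} \leq 0$ (since $v \in \sK$ and $\lambda \in \sM$) together with the complementarity identity produces (\ref{eq:vi}). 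Uniqueness of $u$ then follows from Theorem~\ref{thm:LS67}, and uniqueness of $\lambda$ given $u$ follows from (\ref{eq:spp1}) combined with the injectivity of $B^T : \sQ \to \sV'$ guaranteed by the inf-sup condition (\ref{eq:beta_b}).

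For existence I would begin from the unique $u \in \sK$ furnished by Theorem~\ref{thm:LS67} and construct $\lambda$ in two steps. First, the residual $\ell := f - Au \in \sV'$ annihilates $\ker B$: if $Bv = 0$ then $u \pm v \in \sK$, and testing (\ref{eq:vi}) with both gives $\ang{\ell, v} = 0$. Next, (\ref{eq:beta_b}) implies that $B^T : \sQ \to \sV'$ has closed range, which by the closed-range theorem coincides with the annihilator of $\ker B$; hence there is a unique $\lambda \in \sQ$ with $B^T \lambda = \ell$, i.e.\ (\ref{eq:spp1}) holds.

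The main obstacle is then to verify that this $\lambda$ lies in $\sM$ and that $(u,\lambda)$ satisfies (\ref{eq:spp2}); thanks to the cone structure the latter reduces to positivity $\lambda \geq 0$ a.e.\ together with the complementarity identity $\ang{Bu - g, \lambda} = 0$. For positivity I would test (\ref{eq:vi}) with displacements $w \in \sV$ such that $u + w \in \sK$, obtaining $\ang{Bw, \lambda} = \ang{\ell, w} \leq 0$ whenever $\ang{Bw, q} \leq \ang{g - Bu, q}$ for every $q \in \sM$, and then transfer this dual-cone information to a pointwise statement via the bipolar characterisation of $\sM$ as the predual cone of $\sM' \subset \sQ'$ together with the embedding $\sQ \subset L^2(\Omega)$. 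Complementarity follows by pairing (\ref{eq:spp1}) at $v = u$ to obtain $\ang{Bu, \lambda} = \ang{\ell, u}$ and matching this against $\ang{g, \lambda}$ via a limiting argument along admissible directions in $\sK$; since $u \in \sK$ and $\lambda \in \sM$ already give $\ang{Bu - g, \lambda} \leq 0$, the opposite inequality from the limiting argument forces equality. The delicate interplay between the abstract Banach-space construction of $\lambda$ and the pointwise positivity demanded by $\sM$ --- mediated by the cone duality $(\sM, \sM')$ and the $L^2$-structure of $\sQ$ --- is where the hardest work lies.
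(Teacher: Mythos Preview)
The paper does not give a self-contained proof of this theorem: it simply cites \cite{Brezzi1978}, records that the cited result yields (i) uniqueness of solutions to both {\bf A1} and {\bf A2}, (ii) equivalence of solvability, and (iii) the implication {\bf A2} $\Rightarrow$ {\bf A1}, and then invokes Theorem~\ref{thm:LS67} to guarantee existence for {\bf A1} and hence for {\bf A2}. Your proposal, by contrast, sketches the actual content of the Brezzi--Hager--Raviart argument that sits behind the citation. In that sense you are doing strictly more than the paper does, and your outline is the standard one: part (i) and the uniqueness argument are correct and essentially identical to what the paper later spells out in Lemma~\ref{le:P1eqPpr}; the existence construction via the closed-range theorem applied to $B^T$ is the right mechanism, and your observation that $f - Au$ annihilates $\ker B$ (using $u \pm v \in \sK$ when $Bv = 0$) is exactly how one gets started.

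The one place where your sketch is genuinely incomplete --- and you flag it honestly --- is the positivity of the constructed $\lambda$. What you have is $\ang{Bw,\lambda} \leq 0$ for every $w$ with $u + w \in \sK$, and you want $\lambda \in \sM$. The bipolar route you propose is correct in spirit, but to make it work you need the inf-sup condition in its surjectivity form: $B:\sV \to \sQ'$ is onto, so for any $\eta \in \sM'$ you can choose $w$ with $Bw = -t\eta$ for $t>0$ small (so that $-t\eta \leq g - Bu$ in the cone order, which holds since $g - Bu \in \sM'$ and $-t\eta \leq 0$), and conclude $\ang{\eta,\lambda} \geq 0$ for all $\eta \in \sM'$, whence $\lambda \in \sM$ by the bipolar theorem. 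Complementarity then follows immediately from (\ref{eq:vi}) tested at $v = u$ combined with (\ref{eq:spp1}) at $v = u$, without any limiting argument. So your plan is sound; the paper simply outsources all of this to the reference.
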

\begin{proof}
This statement is essentially a slight modification of Thm.~\ref{thm:LS67} of  {\cite{Brezzi1978}} which states that if such a $\beta_0$ exists, then ({\it i\/}) problems (\ref{eq:vi}) and (\ref{eq:spp}) have at most one solution; ({\it ii\/}) if either problem has a solution, then they both have solutions; and ({\it iii\/}) if $(u,\lambda)$ solves {\bf A2}, then $u$ solves {\bf A1}.  We omit the proof here, and refer the reader to {\cite{Brezzi1978}} for details. The result thus directly follows from Theorem~\ref{thm:LS67}. \quad
\end{proof}
}

\subsection{Mixed Complementarity Problem}  

In the discrete setting, it is well-known that (\ref{eq:vi}) is equivalent to a mixed complementarity problem (see \cite{Karush1939} \& \cite{KT1951} or, e.g., \cite{Facchinei2003}).  We present here a simple extension of this result to our function space setting.

We begin by deriving the mixed complementarity problem (also known as the {\it generalized Karush-Kuhn-Tucker conditions}) corresponding to {\bf A2}:
\begin{problem}[\bf{ P$^{\pr}$}]
\label{pr:micp}
{\it Find $(u,\lambda) \in \sV \times \sQ$ such that}
\begin{subequations}
\label{eq:gkkt}
\begin{alignat}{7}
\label{eq:gkkt1}
\ang{Au,v}  + \ang{Bv, \lambda} & = & \ &  \ang{f,v},  & & \quad\forall\;v\in \sV \\
\label{eq:gkkt2}
\ang{g,q} - \ang{Bu,q} & \geq & \ & 0, & &\quad\forall\;q\in \sM \\
\label{eq:gkkt3}
\lambda & \geq & \ & 0, & & \\
\ang{g-Bu,\lambda} & = & \ & 0.
\label{eq:gkkt4}
\end{alignat}
\end{subequations}

\end{problem}

\begin{lemma} Under the assumptions of Thm.~\ref{thm:BHR78}, there exists a unique solution $(u,\lambda) \in \sV \times \sQ$ to \Ppr.  Furthermore, the pair $(u,\lambda)$ solves {\bf A2} if and only if it solves  {\bf{P$^{\pr}$}}. \label{le:P1eqPpr}
\end{lemma}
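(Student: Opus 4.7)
My plan is to establish the equivalence of Problems \textbf{A2} and \Ppr\ directly, from which both existence and uniqueness of a solution to \Ppr\ follow via Theorem~\ref{thm:BHR78}. The entire argument rests on exploiting the cone structure of $\sM$ (closure under positive scaling and under addition) to convert the single variational inequality (\ref{eq:spp2}) into the pair of conditions (\ref{eq:gkkt2}) and (\ref{eq:gkkt4}), and vice versa.

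For the direction \textbf{A2} $\Rightarrow$ \Ppr, I would note that (\ref{eq:gkkt1}) is identical to (\ref{eq:spp1}) and that $\lambda \in \sM$ gives (\ref{eq:gkkt3}) immediately. The complementarity slackness (\ref{eq:gkkt4}) is the crux: testing (\ref{eq:spp2}) with $q = 0 \in \sM$ yields $\ang{g - Bu, \lambda} \leq 0$, while testing with $q = 2\lambda \in \sM$ (valid because $\sM$ is a cone) yields $\ang{g - Bu, \lambda} \geq 0$, and the two inequalities together give (\ref{eq:gkkt4}). To derive (\ref{eq:gkkt2}), I would substitute $q \leftarrow q + \lambda$ into (\ref{eq:spp2}) for arbitrary $q \in \sM$; since $\sM$ is closed under addition, $q + \lambda \in \sM$, and the inequality reduces to $\ang{Bu,q} \leq \ang{g,q}$.

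For the reverse direction \Ppr\ $\Rightarrow$ \textbf{A2}, the first equation is again shared, and $\lambda \in \sM$ comes from (\ref{eq:gkkt3}) together with the a.e.\ nonnegativity definition of $\sM$. To recover (\ref{eq:spp2}), I would write, for arbitrary $q \in \sM$,
\begin{equation*}
\ang{Bu, q-\lambda} - \ang{g, q-\lambda}
= \bigl[\ang{Bu,q} - \ang{g,q}\bigr] - \bigl[\ang{Bu,\lambda} - \ang{g,\lambda}\bigr],
\end{equation*}
and observe that the first bracket is $\leq 0$ by (\ref{eq:gkkt2}) and the second bracket vanishes by (\ref{eq:gkkt4}), so the left side is $\leq 0$ as required.

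Once the equivalence is established, existence and uniqueness of $(u,\lambda)\in \sV \times \sQ$ solving \Ppr\ follow from Theorem~\ref{thm:BHR78}: that theorem delivers a unique $(u,\lambda) \in \sV \times \sM$ solving \textbf{A2}, and the equivalence shows this same pair solves \Ppr\ uniquely (the apparent discrepancy in the space for $\lambda$, $\sQ$ versus $\sM$, is reconciled by the explicit sign constraint (\ref{eq:gkkt3}), which places any solution of \Ppr\ automatically in $\sM$). I do not expect any genuine obstacle here; the only point requiring care is the bookkeeping between the two function-space conventions for $\lambda$, and the correct use of the cone properties (positive homogeneity for the slackness condition, additive closure for feasibility).
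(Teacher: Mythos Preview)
Your proposal is correct and follows essentially the same approach as the paper: reduce existence/uniqueness to the equivalence statement via Theorem~\ref{thm:BHR78}, obtain (\ref{eq:gkkt2}) by testing (\ref{eq:spp2}) with $q+\lambda$, obtain (\ref{eq:gkkt4}) by testing with $q=0$ and $q=2\lambda$, and recover (\ref{eq:spp2}) by subtracting (\ref{eq:gkkt4}) from (\ref{eq:gkkt2}). Your explicit remarks on the cone properties and on reconciling the $\sQ$-versus-$\sM$ convention for $\lambda$ are slightly more detailed than the paper's version but do not constitute a different argument.
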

\begin{proof}
We first note that if the second statement holds, then the first statement follows directly from Theorem \ref{thm:BHR78}. We thus need only to prove the second statement.  Clearly, (\ref{eq:spp1}) and (\ref{eq:gkkt1}) are equivalent, and we note  from the definition of $\sM$ that (\ref{eq:gkkt3}) is true if and only if $\lambda \in \sM$.  It thus remains to prove that ({\it i\/}) (\ref{eq:spp}) implies (\ref{eq:gkkt2}) and (\ref{eq:gkkt4}), and that  ({\it ii\/}) (\ref{eq:gkkt}) implies (\ref{eq:spp2}).

To prove ({\it i\/}), we note that taking $q = q'+\lambda$ in (\ref{eq:spp2}) yields (\ref{eq:gkkt2}).  Furthermore, (\ref{eq:gkkt4}) follows from taking $q = 2\lambda$ and $q = 0$ in (\ref{eq:spp2}).  To prove ({\it ii\/}), we subtract (\ref{eq:gkkt4}) from (\ref{eq:gkkt2}) to obtain  (\ref{eq:spp2}).  This completes the proof.  \qquad 
\end{proof}

Note that in anticipation of the dual problem to be introduced in later sections, we refer to (\ref{eq:gkkt}) as our {\it primal} problem, {\bf P$^{\pr}$}. 

\subsection{Algebraic Problem Statement}
\label{sec:alg_prob}
Although the problems stated above may be infinite-dimensional, they may, in fact, also represent a ``truth" approximation --- a finite-dimensional, high-fidelity approximation to the infinite-dimensional problem.  

We assume that $\sV$ (respectively, $\sQ$) is finite-dimensional with dimension $\cN_\sV$  ($\cN_\sQ$) and can be represented in terms of basis functions $\Phi_i$ ($\Psi_j$):
\begin{equation}
\label{eq:fe_basis}
\sV = {\rm span}\{ \Phi_i, \ 1 \leq i \leq \cN_\sV\}, \qquad \sQ = {\rm span}\{ \Psi_j, \ 1 \leq j \leq \cN_\sQ\}.
\end{equation}
We further assume that some non-negative basis functions $\Psi_j$ are chosen such that the convex cone $\sM \subset \sQ$ is  given by:
$$ \sM = {\rm span}_+\{ \Psi_j \} = \{ q \in \sQ \; | \; q = \textstyle\sum\limits_{j=1}^{\cN_\sQ} \underline{q}_j \Psi_j \ {\rm and} \ \underline{q} \in \mathbb{R}_+^{\cN_\sQ} \},$$
where $\mathbb{R}_+:= \{ c \in \mathbb{R} \,| \, c \geq 0\}$.  Here, the single underline signifies a vector of coefficients corresponding to the appropriate basis in ({\ref{eq:fe_basis}}).  We then let $\uu \in \mathbb{R}^{\cN_\sV}$ and $\ulambda \in \mathbb{R}_+^{\cN_\sQ}$ be the vector of coefficients of $u$ and $\lambda$:
\begin{equation}
u = \textstyle\sum\limits_{i=1}^{\cN_\sV} \uu_i \Phi_i, \qquad \lambda = \textstyle\sum\limits_{j=1}^{\cN_\sQ} \ulambda_j \Psi_j.
\nonumber
\end{equation}
The coefficients $(\uu,\ulambda)$ are then obtained by solving
\begin{problem}[\uPpr]
Find $(\uu,\ulambda) \in \mathbb{R}^{\cN_\sV} \times \mathbb{R}^{\cN_\sQ}$
\begin{subequations}
\label{eq:kkt}
\begin{eqnarray}
\label{eq:kkt1}
\uA \,\uu + \uB\, \ulambda & = &  \uf \\
\label{eq:kkt2}
\ug - \uB\,\uu & \geq & 0 \\
\label{eq:kkt3}
\ulambda & \geq & 0 \\
(\ug - \uB\,\uu)^T {\ulambda} & = & 0,
\label{eq:kkt4}
\end{eqnarray}
\end{subequations}
where $\uA_{ij} = \ang{A \Phi_j, \Phi_i}$,  $\uB_{kj} = \ang{B \Phi_j, \Psi_k}$, $\uf_i = \ang{f,\Phi_i}$, and $\ug_k = \ang{g,\Psi_k}$ for $1 \leq i,j \leq \cN_\sV$, and $1 \leq k \leq \cN_\sQ$.  
\end{problem}

We now consider approximations to the ``truth", as well as methods for {\it a posteriori} estimation of the error with respect to the ``truth".  We consider two approaches: a primal-only approach in Sec.~\ref{sec:primal}, and a duality approach in Sec.~\ref{sec:duality}.  

\section{Approximation and Error Estimation: A Primal-Only Approach}
\label{sec:primal}
The approach described in this section was first introduced in \cite{HSW2012} in the context of the reduced basis method for elliptic VIs and subsequently applied to parabolic VIs in \cite{Burkovska2014}, \cite{Glas2013}, and \cite{Haasdonk2012}.  Since we shall require key elements of this earlier work in our proposed approach in Sec.~\ref{sec:duality}, we summarise here the main aspects of the methods in \cite{HSW2012}.  To emphasize the generality of our proposed approach and also for the sake of clarity, we postpone the introduction of parameters to Sec.~\ref{sec:rb}.

\subsection{Approximation}
\label{sec:primalapprox}

Following \cite{HSW2012}, we let the approximation spaces $\sV_n \subset \sV$ and $\sQ_n \subset \sQ$ be given by
\begin{equation}
\label{eq:basis_n}
\sV_n = {\rm span} \{ \varphi_i \in \sV, \ 1 \leq i \leq n_\sV\},  \qquad \sQ_n = {\rm span} \{ \psi_j \in \sM, \ 1 \leq j \leq n_\sQ \},
\end{equation}  
where the basis functions $\varphi_i$, $\psi_j$ are assumed to be linearly independent.  
We further let $\sM_n \subset \sQ_n$ be the closed convex cone
$$ \sM_n = \{ q \in \sQ_n \, | \, q = \textstyle\sum\limits_{j=1}^{n_\sQ} \underline{\underline{q}}_j \psi_j \ {\rm and} \ \underline{\underline{q}} \in \mathbb{R}_+^{n_\sQ} \},$$
and define 
\begin{equation}
\label{eq:K_n}
\sK_n := \left\{ v_n \in \sV_n \; | \; \ang{Bv_n,q_n} \leq \ang{g,q_n},\ \forall \; q_n \in \sM_n \right\}.
\end{equation}
Note that in general, $\sK_n \not\subset \sK$ since the inequality in \eqref{eq:basis_n} holds only in $\sM_n$, not $\sM$.

We now consider the following approximation to Problem~{\bf P$^{\pr}$}.  \begin{problem}[{\bf P$^\pr_n$}]
\label{pr:micp-app}
Find $(u_n^\pr,\lambda_n^\pr) \in \sV_n \times \sQ_n$ such that
\begin{subequations}
\begin{alignat}{7}
\label{eq:gkkt-app-1}
\ang{Au_n^\pr,v_n}  + \ang{Bv_n, \lambda_n^\pr} & = & \ &  \ang{f,v_n},  & & \quad\forall\;v_n\in \sV_n \\
\label{eq:gkkt-app-2}
\ang{g,q_n} - \ang{Bu_n^\pr,q_n} & \geq & \ & 0, & &\quad\forall\;q_n\in \sM_n \\
\label{eq:gkkt-app-3}
\lambda_n^\pr & \geq & \ & 0, & &  \\
\ang{g-Bu_n^\pr,\lambda_n^\pr} & = & \ & 0.
\label{eq:gkkt-app-4}
\end{alignat}
\end{subequations}
\end{problem}
\begin{corollary}
\label{cor:bhr}
Suppose there exists a constant $\beta_0>0$ such that 
\begin{equation} 
\label{eq:beta_b_n}
\beta_n \equiv \inf_{q_n\in \sQ_n}  \sup_{v_n\in \sV_n} \frac{b(v_n,q_n)}{\|q_n\|_{\sQ}\|v_n\|_{\sV_n}} \geq \beta_0 > 0, 
\end{equation}
for $n \in {\mathbb N}$.  Then there exists a unique solution $(u_n,\lambda_n)$ to {\bf P$^\pr_n$}.\end{corollary}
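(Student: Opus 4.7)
The plan is to reduce Corollary~\ref{cor:bhr} to the infinite-dimensional results already established, by observing that $\sV_n$ and $\sQ_n$ inherit from $\sV$ and $\sQ$ all the structure needed to re-run the arguments of Theorem~\ref{thm:BHR78} and Lemma~\ref{le:P1eqPpr} verbatim in the discrete subspaces. First, introduce the natural discrete analog of \textbf{A2}: find $(u_n,\lambda_n)\in \sV_n\times\sM_n$ satisfying $\ang{Au_n,v_n}+\ang{Bv_n,\lambda_n}=\ang{f,v_n}$ for all $v_n\in\sV_n$ and $\ang{Bu_n,q_n-\lambda_n}\leq\ang{g,q_n-\lambda_n}$ for all $q_n\in\sM_n$; call this Problem \textbf{A2}$_n$.

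Next, verify that the hypotheses of Theorem~\ref{thm:BHR78} hold after replacing $\sV,\sQ,\sM,\sK$ by $\sV_n,\sQ_n,\sM_n,\sK_n$. The spaces $\sV_n$ and $\sQ_n$ are themselves Hilbert spaces under the inner products inherited from $\sV$ and $\sQ$; coercivity and continuity of $A$ restricted to $\sV_n$ follow immediately from (\ref{eq:alpha-gamma}) since $\sV_n\subset\sV$, so the restricted bilinear form retains the constants $\alpha$ and $\gamma$. The inf-sup condition on $\sV_n\times\sQ_n$ is exactly the assumption (\ref{eq:beta_b_n}), and $\sK_n$ defined in (\ref{eq:K_n}) is closed (being the intersection of closed half-spaces in the finite-dimensional $\sV_n$) and convex. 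Applying Theorem~\ref{thm:BHR78} then yields a unique $(u_n,\lambda_n)\in\sV_n\times\sM_n$ solving \textbf{A2}$_n$.

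Finally, mirror the proof of Lemma~\ref{le:P1eqPpr} in the discrete setting to obtain equivalence of \textbf{A2}$_n$ and \Pprn: the substitutions $q_n = q_n'+\lambda_n$, $q_n = 2\lambda_n$, and $q_n = 0$ all remain in $\sM_n$ because $\sM_n$ is a convex cone by construction, so every step of the original argument carries over to derive (\ref{eq:gkkt-app-2})--(\ref{eq:gkkt-app-4}) from the mixed inequality and vice versa. The unique solution of \textbf{A2}$_n$ therefore provides the unique solution of \Pprn.

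The main obstacle I anticipate is essentially bookkeeping: ensuring that non-emptiness of $\sK_n$ is preserved under discretisation, since Theorem~\ref{thm:BHR78} requires this implicitly. This typically follows from $0\in\sK_n$ provided $\ang{g,q_n}\geq 0$ for all $q_n\in\sM_n$, which is natural in the obstacle-problem setting that motivates the paper. Once that is in hand, the corollary is a direct transcription of Theorem~\ref{thm:BHR78} and Lemma~\ref{le:P1eqPpr} to the subspace pair $(\sV_n,\sQ_n)$, and no new technical ingredients are required beyond the discrete inf-sup hypothesis (\ref{eq:beta_b_n}).
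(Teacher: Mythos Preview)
Your proposal is correct and follows essentially the same approach as the paper, which simply states that the result ``directly follows from Thm.~\ref{thm:BHR78}; see also \cite{HSW2012}.'' You have filled in the details the paper leaves implicit---in particular, you explicitly invoke the discrete analog of Lemma~\ref{le:P1eqPpr} to pass from the mixed formulation \textbf{A2}$_n$ to the complementarity formulation \Pprn, and you flag the non-emptiness of $\sK_n$, both of which the paper's one-line proof absorbs into the reference to Theorem~\ref{thm:BHR78}.
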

\begin{proof}
The result directly follows from Thm.~\ref{thm:BHR78}; see also \cite{HSW2012}. \qquad  
\end{proof}

As in Sec.~\ref{sec:ProbStatement}, we now derive the algebraic equations corresponding to {\bf P$^\pr_n$}.  Following the notation in Sec.~\ref{sec:alg_prob}, we denote by ${\uvarphi}{}_i \in \mathbb{R}^{\cN_\sV}$ and $\upsi{}_j \in \mathbb{R}^{\cN_\sQ}$ the coefficients of $\varphi_i$ and $\psi_j$ corresponding to the  ``truth"  bases in (\ref{eq:fe_basis}).  Furthermore, we shall from here on use a double underline to signify a vector of coefficients corresponding to the approximation bases in (\ref{eq:basis_n}).  In particular, any $v_n \in \sV_n$ can then be written as
\begin{equation}
v_n \, = \, \textstyle\sum\limits_{i=1}^{n_\sV} \uuv{}_{n,i} \varphi{}_i \, = \, \textstyle\sum\limits_{i=1}^{n_\sV} \uuv{}_{n,i} \bigg(\textstyle\sum\limits_{j=1}^{\cN_\sV}{\uvarphi}{}_{i,j} \Phi{}_j\bigg).
\label{eq:different_bases}
\end{equation}
We note from (\ref{eq:different_bases}) that we can express $\uv_n$  in terms of $\uuv_n$ as
$ \uv_n = \textstyle\sum_{i=1}^{n_\sV}\uuv_{n,i}\underline{\varphi}_{i}.$
We can now readily derive the algebraic formulation of \Pprn.
  
\begin{problem}[\uuPprn]
\label{pr:micp-app-alg}
Find $(\uuu_n^\pr,\uulambda_n^\pr) \in \mathbb{R}^{n_\sV} \times \mathbb{R}^{n_\sQ}$ such that
\begin{subequations}
\begin{alignat}{7}
\label{eq:rkkt-pr-1}
\uuA{}_n \,\uuu{}^\pr_n + \uuB{}_n\, \uulambda{}^\pr_n & = &  \uuf{}_n \\
\label{eq:rkkt-pr-2}
\uug{}_n - \uuB{}_n\,\uuu{}^\pr_n & \geq & 0 \\
\label{eq:rkkt-pr-3}
\uulambda{}^\pr_n & \geq & 0 \\
(\uug{}_n - \uuB{}_n\,\uuu{}^\pr_n)^T {\uulambda{}^\pr_n} & = & 0,
\label{eq:rkkt-pr-4}
\end{alignat}
\end{subequations}
where ${\uuA}{}_{n,ij} = \uvarphi{}_i^T \uA\, \uvarphi_j$,  ${\uuB}{}_{n,kj} = \upsi{}_k^T \uB\, \uvarphi_j$, ${\uuf}{}_i = \uvarphi{}^T_i{\uf}$, and ${\uug}{}_k = \upsi{}^T_k{\ug}$ for $i,j = 1,\dots,n_{\sV}$ and $j=1,\dots,n_{\sQ}$.  
\end{problem}

In this paper, we are concerned with obtaining upper bounds of the error in our approximation $(u_n,\lambda_n)$ with respect to our truth solution $(u,\lambda)$.  Later we use these upper bounds in the reduced basis setting, but these error estimates are, in fact, quite general.  To motivate the methods that we propose in Sec.~\ref{sec:duality}, we first review the relevant results in the literature. 

\subsection{A Posteriori Error Estimation}
\label{sec:pr-a-post-error}

In this section, we summarise results initially presented in \cite{HSW2012} for elliptic variational inequalities and subsequently applied and extended to parabolic variational inequalities in \cite{Haasdonk2012},  \cite{Glas2013} and \cite{Burkovska2014}.  Following \cite{HSW2012}, we let $r_{\rm e} \in \sV'$ and $r_{\rm i} \in \sQ'$ be given by
\begin{subequations}
\begin{alignat}{5}
\label{eq:residual-e}
r_{\rm e}(v) & := &&\ \ang{f,v} - \ang{Au_n^\pr,v} - \ang{B v, \lambda_n^\pr}, & \qquad \forall \; v \in \sV, \\
\label{eq:residual-i}
r_{\rm i}(q) & := && \ \ang{Bu_n^\pr,q} - \ang{g,q}, & \qquad \forall \; q \in \sQ.
\end{alignat}
\end{subequations}
In \cite{HSW2012}, $r_{\rm e}(\cdot)$ and $r_{\rm i}(\cdot)$ are denoted the ``equality and inequality residuals", respectively.  We let ${e}_{\rm i} \in \sQ$ and $\tilde{e}_{\rm i} \in \sQ$ be the Riesz representation of, respectively, the inequality residual and the ``detailed inequality functional"
\begin{subequations}
\begin{alignat}{5}
\label{eq:riesz-e}
({e}_{\rm i},q)_\sQ & = &&\ \ang{Bu_n^\pr,q} - \ang{g,q} = r_{\rm i}(q), & \qquad  \forall \; q \in \sQ, \\
\label{eq:riesz-etilde}
(\tilde{e}_{\rm i},q)_\sQ & = &&\ \ang{Bu,q} - \ang{g,q}, & \qquad  \forall \; q \in \sQ.
\end{alignat}
\end{subequations}
The development in \cite{HSW2012} further requires a projection $\Pi: \sQ \to \sM$ that is orthogonal with respect to a scalar product $(\cdot,\cdot)_\Pi$ on $\sQ$ endowed with the induced norm $\| q \|_\Pi := \sqrt{(q,q)_\Pi}$.  The (generally nonlinear) projection $\Pi$ is then assumed to satisfy
\begin{subequations}
\begin{alignat}{5}
\label{eq:proj-1}
(q-\Pi(q),\eta)_\sQ & \leq &\ & 0, &&\qquad  \forall q \, \in \sQ, \ \forall \, \eta \in \sM, \\
\label{eq:proj-2}
\Pi(\tilde{e}_{\rm i}) & = && 0, \\
\label{eq:proj-3}
(q,\tilde{e}_{\rm i})_\Pi & \leq &\ & 0, &&\qquad  \forall \, q \in \sM.
\end{alignat}
\end{subequations}
We refer the reader to \cite{HSW2012} for further details on the choice of $\Pi$.  

We now state the main results of \cite{HSW2012}.  To motivate the methods that we propose in Sec.~\ref{sec:duality} (and to properly analyze the differences between the two methods), we include the detailed proofs here.  To begin, we shall need
\begin{lemma} 
\label{lem:dual-err}
The error in the approximation for the KKT  multiplier $\lambda$ can be bounded in terms of the error in the approximation for the primal variable $u$: 
\begin{equation} 
\label{eq:dual-err}
\| \lambda - \lambda_n^\pr \|_\sQ \leq \frac{1}{\beta} \big(\|r_{\rm e}\|_{\sV'} + \gamma \|u - u_n^\pr\|_\sV\big).
\end{equation}
\end{lemma}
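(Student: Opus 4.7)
The plan is to exploit the inf-sup condition on $B$ from (\ref{eq:beta_b}) to control $\|\lambda - \lambda_n^\pr\|_\sQ$ by a supremum over $v \in \sV$ of $\ang{Bv, \lambda - \lambda_n^\pr}$, and then to rewrite that pairing using the equality equations satisfied respectively by $(u,\lambda)$ (the truth primal equation (\ref{eq:gkkt1})) and by $(u_n^\pr, \lambda_n^\pr)$ (the equality residual definition (\ref{eq:residual-e})).

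First, I would apply the inf-sup inequality (\ref{eq:beta_b}) with $q = \lambda - \lambda_n^\pr \in \sQ$, which gives
\[
\|\lambda - \lambda_n^\pr\|_\sQ \;\leq\; \frac{1}{\beta}\sup_{v\in\sV}\frac{\ang{Bv,\lambda-\lambda_n^\pr}}{\|v\|_\sV}.
\]
Next, I would express $\ang{Bv,\lambda}$ using (\ref{eq:gkkt1}) as $\ang{f,v} - \ang{Au,v}$, and $\ang{Bv,\lambda_n^\pr}$ using (\ref{eq:residual-e}) as $\ang{f,v} - \ang{Au_n^\pr, v} - r_{\rm e}(v)$. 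Subtracting these two identities would yield the clean decomposition
\[
\ang{Bv,\lambda - \lambda_n^\pr} \;=\; r_{\rm e}(v) \;-\; \ang{A(u - u_n^\pr),\,v}.
\]

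Then, bounding the right-hand side termwise using the definition of the dual norm $\|r_{\rm e}\|_{\sV'}$ for the first term and the continuity constant $\gamma$ from (\ref{eq:alpha-gamma}) for the second term, I would get
\[
|\ang{Bv,\lambda - \lambda_n^\pr}| \;\leq\; \bigl(\|r_{\rm e}\|_{\sV'} + \gamma\,\|u-u_n^\pr\|_\sV\bigr)\|v\|_\sV.
\]
Dividing by $\|v\|_\sV$ and taking the supremum over $v \in \sV$, then inserting into the inf-sup bound, would produce exactly (\ref{eq:dual-err}).

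I do not expect any real obstacle here: the proof is essentially a one-line application of the inf-sup condition after rearranging the equality equations. The only subtle point is recognizing that the error in $\lambda$ is driven by two distinct quantities, the equality residual $r_{\rm e}$ (which measures the discrete failure of (\ref{eq:gkkt1})) and the primal error $u - u_n^\pr$ (weighted by the continuity constant $\gamma$ of $A$). Note that the inequality constraint information (involving $r_{\rm i}$) plays no role in this particular estimate.
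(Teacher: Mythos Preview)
Your argument is correct and is precisely the standard inf-sup argument the paper invokes: the paper does not spell out the proof but simply cites Prop.~1.3 (Sec.~II.1) of Brezzi--Fortin, whose content is exactly the computation you outline. There is nothing to add.
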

\begin{proof}
This statement is a direct application of the result (and proof) of Prop. 1.3 (Sec.~II.1) in \cite{BF1991}.  We thus refer the reader to \cite{BF1991} for more details.   \qquad
\end{proof}

We then derive a posteriori error estimators in
\begin{proposition} 
\label{prop:HSW}
{\bf [Haasdonk, Salomon \& Wohlmuth, 2012]}  We define the residual estimators
\label{prop:error-bound}
\begin{equation}
\label{eq:res-est}
\delta_0 := \| r_{\rm e}\|_{\sV'}, \qquad \delta_1:= \| \Pi({e}_{\rm i}) \|_\sQ, \qquad \delta_2 := ({\lambda_n,\Pi({e}_{\rm i}))_\sQ}
\end{equation}
and the constants
\begin{equation}
\label{eq:pr-constants-hat}
\hat{c}_1 := \frac{1}{2\alpha} \left( \delta_0 + \frac{\gamma \delta_1  }{\beta}\right),\qquad
\hat{c}_2 :=  \frac{1}{\alpha} \left( \frac{\delta_0\delta_1}{\beta}   + \delta_2\right).
\end{equation}
The errors can then be bounded by 
\begin{subequations}
\begin{alignat}{8}
\label{eq:pron-errbnd-pr}
\|u-u_n^\pr\|_\sV & \leq & \ & \hat{\Delta}_u^{\pr} & \ & := &  \ & \hat{c}_1 + \sqrt{\hat{c}_1^2 + \hat{c}_2} \quad \\
\label{eq:pron-errbnd-du}
\|\lambda-\lambda_n^\pr\|_\sQ & \leq & \ & \hat{\Delta}_\lambda^{\pr} & \ & := &  \ & \frac{1}{\beta} \left( \delta_0 + \gamma \hat{\Delta}_u^{\pr} \right) 
\end{alignat}
\end{subequations}
\end{proposition}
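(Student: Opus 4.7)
My plan is to prove the primal bound first and then deduce the dual bound immediately from Lemma~\ref{lem:dual-err}. The strategy is the standard ``coercivity plus residual identity'' argument, where the crucial step is handling the cross term $\ang{B(u-u_n^{\pr}),\lambda-\lambda_n^{\pr}}$ via the two complementarity conditions and the projection $\Pi$.

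I would start from the coercivity estimate $\alpha\|u-u_n^{\pr}\|_\sV^2 \leq \ang{A(u-u_n^{\pr}),u-u_n^{\pr}}$. Using \eqref{eq:gkkt1} for the truth and the definition \eqref{eq:residual-e} of $r_{\rm e}$ to rewrite $\ang{Au_n^{\pr},u-u_n^{\pr}}$, I expect the right-hand side to simplify to
\begin{equation*}
\alpha\|u-u_n^{\pr}\|_\sV^2 \leq r_{\rm e}(u-u_n^{\pr}) - \ang{B(u-u_n^{\pr}),\lambda-\lambda_n^{\pr}}.
\end{equation*}
Next, I would invoke the complementarity identities $\ang{Bu-g,\lambda}=0$ (from \eqref{eq:gkkt4}) and $\ang{Bu_n^{\pr}-g,\lambda_n^{\pr}}=0$ (from \eqref{eq:gkkt-app-4}) to rewrite
\begin{equation*}
-\ang{B(u-u_n^{\pr}),\lambda-\lambda_n^{\pr}} = \ang{Bu_n^{\pr}-g,\lambda} + \ang{Bu-g,\lambda_n^{\pr}} = ({e}_{\rm i},\lambda)_\sQ + (\tilde{e}_{\rm i},\lambda_n^{\pr})_\sQ,
\end{equation*}
using \eqref{eq:riesz-e}--\eqref{eq:riesz-etilde}.

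The central step is to exploit the projection $\Pi$. Since $\lambda_n^{\pr} \in \sM_n \subset \sM$, property \eqref{eq:proj-1} applied with $q=\tilde e_{\rm i}$, $\eta=\lambda_n^{\pr}$, combined with $\Pi(\tilde e_{\rm i})=0$ from \eqref{eq:proj-2}, gives $(\tilde{e}_{\rm i},\lambda_n^{\pr})_\sQ \leq 0$; likewise, \eqref{eq:proj-1} with $q={e}_{\rm i}$, $\eta=\lambda$ yields $({e}_{\rm i},\lambda)_\sQ \leq (\Pi({e}_{\rm i}),\lambda)_\sQ$. Splitting $\lambda = (\lambda-\lambda_n^{\pr}) + \lambda_n^{\pr}$ and applying Cauchy--Schwarz then gives
\begin{equation*}
({e}_{\rm i},\lambda)_\sQ \leq \delta_1 \|\lambda-\lambda_n^{\pr}\|_\sQ + \delta_2.
\end{equation*}
Together with the obvious bound $r_{\rm e}(u-u_n^{\pr}) \leq \delta_0\|u-u_n^{\pr}\|_\sV$, this produces
\begin{equation*}
\alpha\|u-u_n^{\pr}\|_\sV^2 \leq \delta_0\|u-u_n^{\pr}\|_\sV + \delta_1\|\lambda-\lambda_n^{\pr}\|_\sQ + \delta_2.
\end{equation*}

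To close the inequality I would substitute Lemma~\ref{lem:dual-err} for $\|\lambda-\lambda_n^{\pr}\|_\sQ$, which introduces a $\gamma\|u-u_n^{\pr}\|_\sV/\beta$ contribution. Collecting terms, the estimate takes the form
\begin{equation*}
\|u-u_n^{\pr}\|_\sV^2 \leq 2\hat{c}_1\|u-u_n^{\pr}\|_\sV + \hat{c}_2,
\end{equation*}
with $\hat c_1,\hat c_2$ as in \eqref{eq:pr-constants-hat}. Treating this as a quadratic in $\|u-u_n^{\pr}\|_\sV$ and taking the larger root yields \eqref{eq:pron-errbnd-pr}. Finally, \eqref{eq:pron-errbnd-du} is obtained by feeding this bound into Lemma~\ref{lem:dual-err}. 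I expect the main obstacle to be bookkeeping: the signs on the projection inequalities, the direction of the $\sM$--$\sM'$ duality, and ensuring that $\lambda_n^{\pr} \in \sM$ (rather than merely $\sM_n$) so that \eqref{eq:proj-1} is applicable — everything else is essentially Cauchy--Schwarz and the quadratic formula.
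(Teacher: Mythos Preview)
Your proposal is correct and follows essentially the same route as the paper. The only cosmetic difference is in how the cross term is unpacked: the paper invokes the saddle-point inequality \eqref{eq:spp2} (which, combined with $r_{\rm i}(\lambda_n^{\pr})=0$, reduces the term directly to $r_{\rm i}(\lambda)=(e_{\rm i},\lambda)_\sQ$), whereas you use the two complementarity equalities to obtain the exact identity $(e_{\rm i},\lambda)_\sQ+(\tilde e_{\rm i},\lambda_n^{\pr})_\sQ$ and then discard the second summand. These are equivalent, since \eqref{eq:spp2} with $q=\lambda_n^{\pr}$ is precisely the statement $(\tilde e_{\rm i},\lambda_n^{\pr})_\sQ\le 0$; from that point on (projection step, Cauchy--Schwarz, Lemma~\ref{lem:dual-err}, quadratic formula) the two arguments coincide verbatim. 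Your worry about $\lambda_n^{\pr}\in\sM$ is unfounded: \eqref{eq:gkkt-app-3} gives $\lambda_n^{\pr}\ge 0$ directly, and in any case $\sM_n\subset\sM$ by construction.
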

\begin{proof}
For conciseness, we omit in this proof the superscript ``\pr."  We thus emphasize that here, $(u_n,\lambda_n)$ refers to our {\it primal} approximation $(u_n^\pr,\lambda_n^\pr)$.  

To begin, we note that the result (\ref{eq:pron-errbnd-du}) follows directly from Lemma~\ref{lem:dual-err} and (\ref{eq:pron-errbnd-pr}).  It thus remains to prove (\ref{eq:pron-errbnd-pr}).  From (\ref{eq:alpha-gamma}), (\ref{eq:residual-e}), and  (\ref{eq:res-est}), we have
\begin{eqnarray}
\nonumber
\alpha \| u-u_n\|_\sV^2 & \; \leq \; & \ang{A(u-u_n),u-u_n} \\
\nonumber
& = & r_{\rm e}(u-u_n) - \ang{B(u-u_n),\lambda - \lambda_n} \\
\label{eq:pron-1}
& \leq & \delta_0 \| u-u_n \|_\sV  + \ang{Bu,\lambda_n - \lambda} -\ang{Bu_n,\lambda_n - \lambda}.
\end{eqnarray}
Using (\ref{eq:spp2}), (\ref{eq:residual-i}),  (\ref{eq:gkkt-app-2}), (\ref{eq:riesz-e}), (\ref{eq:proj-1}), and (\ref{eq:res-est}), we have
\begin{subequations}
\begin{eqnarray}
\nonumber
 \ang{Bu,\lambda_n - \lambda} -\ang{Bu_n,\lambda_n - \lambda}  & \; \leq \; & \ang{g,\lambda_n - \lambda} - \Big(\ang{r_{\rm i},\lambda_n - \lambda} + \ang{g,\lambda_n - \lambda}\Big) \qquad \\
& \; \leq \; & {r_{\rm i}}(\lambda) 
\label{eq:almostorth} \\
\nonumber
& \leq & (\lambda,e_{\rm i}-\Pi(e_{\rm i}))_\sQ + (\lambda,\Pi(e_{\rm i}))_\sQ\\
\nonumber
& \leq & (\lambda,\Pi(e_{\rm i}))_\sQ \\ 
\nonumber
& = & (\lambda - \lambda_n,\Pi(e_{\rm i}))_\sQ + (\lambda_n,\Pi(e_{\rm i}))_\sQ \\
\label{eq:cauchy-schwarz} 
& \leq & \delta_1 \|\lambda - \lambda_n\|_\sQ + \delta_2.
\end{eqnarray}
\end{subequations}
Substituting into (\ref{eq:pron-1}) and applying (\ref{eq:dual-err}) and (\ref{eq:pr-constants-hat}), it  follows that
$$ \| u-u_n\|_\sV^2 - 2\hat{c}_1 \|u-u_n\|_\sV - \hat{c}_2 \leq 0.$$
Solving the quadratic inequality then yields (\ref{eq:pron-errbnd-pr}).  \qquad
\end{proof}

We now make some observations about the {\it a posteriori} error bounds derived above.  First, the computational cost to compute $\hat{\Delta}_u^{\pr}$ and $\hat{\Delta}_\lambda^{\pr}$ as in \cite{HSW2012} relies greatly on the particular choice of the projection $\Pi$ and the scalar product $(\cdot,\cdot)_\sQ$ on $\sQ$.  Second, we note that $r_{\rm i}(\lambda)$ is close to zero presuming that $(u_n^\pr,\lambda_n^\pr)$ is a good approximation to $(u,\lambda)$.  However, the subsequent steps  in the derivation of the error bounds --- in particular the use of the Cauchy-Schwarz inequality in (\ref{eq:cauchy-schwarz}) and the application of (\ref{eq:dual-err}) --- cause the resulting error estimators to lose sharpness.  The culprit, as we shall see in the next section, lies in the fact that $K_n$ is not necessarily in $\sK$, i.e., $u_n$ satisfies (\ref{eq:gkkt-app-2}) but not (\ref{eq:gkkt2}).  We shall return to these remarks in {{subsequent sections}}.  In the meantime, however, these considerations behoove us to develop an alternative approach that  preserves the near-orthogonality of the terms in (\ref{eq:almostorth}) and concurrently reduces the computational cost.

\section{Approximation and Error Estimation: A Duality Approach}
\label{sec:duality}
We now introduce a {\it dual} or {\it auxiliary} problem which provides strictly feasible approximations to our original primal problem.  We reiterate that $B$ is assumed to be bijective. Our point of departure is the algebraic formulation of the standard variational inequality {\bf A1}: 
\begin{problem}[\underline{\bf A1}]
{\it Given $\usK := \left\{ \uv \in \RcNV \; | \; {\uB\,\uv \leq {\ug} }\right\}\!,$ find $\uu \in \usK$ such that 
\begin{equation}
\label{eq:vi-alg}
(\uv-\uu)^T \uA\, \uu  \geq (\uv-\uu)^T \uf, \quad\forall\;\uv\in \usK.
\end{equation}
If $\uA$ is symmetric positive-definite, then (\ref{eq:vi-alg}) is the optimality condition of the corresponding minimization problem
\begin{equation}
\label{eq:min-alg}
\uu = \arg \min_{\uv \in \usK} {\textstyle \frac12} \uv{}^T\uA\,\uv - \uv{}^T\uf.
\end{equation}
}
\end{problem}

\noindent Here, all discrete quantities are defined as in Sec.~\ref{sec:alg_prob}.  

\subsection{The Dual Problem}

We now introduce a {\it dual} or {\it slack} variable $s \in \sM' {\subset Q'}$ given by
\begin{equation}
\ang{s,q} = \ang{g-Bu,q}, \quad \forall \ q \in \sQ.
\end{equation}
Since $B$ is bijective, we have
$ u = B^{-1} \left(g-s\right){\in \sK}.$
Defining the corresponding finite element vector $\us\in \RcNQ_+$ as 
\begin{equation}
\label{eq:slack}
\us := \ug - \uB\, \uu ,
\end{equation}
we can then define our {\it dual\,} problem as:
\begin{problem}[\uPdu]
{\it Find $\us \in \RcNQ_+$ such that 
\begin{equation}
\label{eq:vi-du}
(\uxi-\us)^T \utA\, \us  \geq (\uxi-\us)^T \utf, \quad\forall\;\uxi\in  \RcNQ_+.
\end{equation}
where $\utA:\RcNQ \to \RcNQ$ and $\utf \in \RcNQ$ are given by
\label{eq:tilde}
\begin{eqnarray} 
\utA :=  \uB^{-T}\uA\, \uB^{-1},\qquad
\utf  :=  \uB^{-T}\uA\, \uB^{-1} \ug - \uB^{-T} \uf.
\end{eqnarray}
If $\uA$ is symmetric positive-definite, then (\ref{eq:vi-du}) is the optimality condition of the corresponding minimization problem
\begin{equation}
\label{eq:min-alg-du}
\us = \arg \min_{\uxi \in \RcNQ_+} {\textstyle \frac12} \uxi{}^T\utA\,\uxi - \uxi{}^T\utf.
\end{equation}
}
\end{problem}

We now show that  \uPpr\ and \uPdu\ are equivalent in:

\begin{corollary} If $\us$ and $\uu$ are related by (\ref{eq:slack}), then $\us$ is the solution to \uPdu\ if and only if $\uu$ is the solution to \uPpr.
\end{corollary}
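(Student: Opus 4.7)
The plan is to pass between \uPpr\ and \uPdu\ via the bijective change of variables $\us = \ug - \uB\uu \;\Leftrightarrow\; \uu = \uB^{-1}(\ug-\us)$, and to recognise that the KKT multiplier $\ulambda$ of \uPpr\ coincides exactly with the ``dual residual'' $\utA\us - \utf$. Indeed, combining the first KKT equation~(\ref{eq:kkt1}) with $\uu = \uB^{-1}(\ug-\us)$ and the definitions in~(\ref{eq:tilde}) yields, after straightforward algebra using $\uB^{-1}$, the identity $\utA\us - \utf = \ulambda$. Once this identification is in hand, the rest of the proof reduces to the elementary observation that for vectors $\us, \ulambda \in \RcNQ$ the conjunction ``$\us \geq 0,\ \ulambda \geq 0,\ \us^T\ulambda = 0$'' is equivalent to ``$(\uxi - \us)^T \ulambda \geq 0$ for all $\uxi \in \RcNQ_+$'', which is the standard VI characterisation of projection onto the non-negative orthant.

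For the forward direction, I would assume that $(\uu,\ulambda)$ solves \uPpr\ and set $\us := \ug - \uB\uu$; feasibility $\us \in \RcNQ_+$ is exactly (\ref{eq:kkt2}), and the identification above gives $\utA\us - \utf = \ulambda$. Then for any $\uxi \in \RcNQ_+$,
\begin{equation*}
(\uxi - \us)^T(\utA\us - \utf) \;=\; (\uxi - \us)^T \ulambda \;=\; \uxi^T\ulambda - \us^T\ulambda \;=\; \uxi^T\ulambda \;\geq\; 0,
\end{equation*}
using (\ref{eq:kkt3})--(\ref{eq:kkt4}); this is precisely (\ref{eq:vi-du}). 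For the converse, I would assume $\us \in \RcNQ_+$ solves \uPdu\ and \emph{define} $\uu := \uB^{-1}(\ug - \us)$ and $\ulambda := \utA\us - \utf$. By construction $\ug - \uB\uu = \us \geq 0$, giving (\ref{eq:kkt2}), and reversing the algebra that led to the identification recovers the equality (\ref{eq:kkt1}). Testing (\ref{eq:vi-du}) with $\uxi = \us + e_j$ for each standard basis vector $e_j$ of $\RcNQ$ gives $e_j^T\ulambda \geq 0$, i.e.\ (\ref{eq:kkt3}); and testing with $\uxi = 0$ and $\uxi = 2\us$ yields $\us^T\ulambda \leq 0$ and $\us^T\ulambda \geq 0$ respectively, hence the complementarity (\ref{eq:kkt4}).

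The only real ``obstacle'' is bookkeeping: the clean identification $\ulambda = \utA\us - \utf$ requires carefully matching the transpose conventions built into the definitions of $\uB$, $\utA$, and $\utf$, and in particular exploiting that the first KKT equation can be premultiplied by $\uB^{-T}$ to produce precisely the combination of terms appearing in $\utA\us - \utf$. Once the conventions are aligned, the corollary follows immediately, and no genuinely technical step is needed beyond the two test-vector choices above.
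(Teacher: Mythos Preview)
Your proof is correct, but it takes a genuinely different route from the paper's. The paper does not touch the KKT multiplier $\ulambda$ at all: it works with the variational inequality form \underline{\bf A1} (eq.~(\ref{eq:vi-alg})) and simply performs the bijective change of test functions $\uv \leftrightarrow \uxi$ via $\uxi = \ug - \uB\uv$, so that $\uv - \uu = \uB^{-1}(\us - \uxi)$; substituting this into (\ref{eq:vi-alg}) and using the definitions (\ref{eq:tilde}) immediately yields (\ref{eq:vi-du}), and the converse is the same substitution in reverse. The equivalence of \uPpr\ with \underline{\bf A1} is then taken from the earlier Lemma~\ref{le:P1eqPpr} and Theorem~\ref{thm:BHR78}.

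Your approach, by contrast, stays entirely inside the KKT system and hinges on the identification $\ulambda = \utA\us - \utf$, after which (\ref{eq:vi-du}) becomes the standard complementarity characterisation $(\uxi-\us)^T\ulambda \geq 0$ for all $\uxi \in \RcNQ_+$. This buys you a little extra insight---the primal multiplier is exactly the dual ``gradient residual''---and avoids invoking the earlier equivalence results, at the price of the algebraic bookkeeping you flag (indeed, the identification only comes out cleanly once (\ref{eq:kkt1}) is read with the transpose $\uB^T\ulambda$, consistent with (\ref{eq:gkkt1}) and the later (\ref{eq:kkt-pr-rb})). The paper's argument is shorter and more geometric; yours is more self-contained and makes the connection between $\ulambda$ and the dual problem explicit.
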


\begin{proof}
We first show that (\ref{eq:vi-alg}) implies (\ref{eq:vi-du}).  Since $B$ is bijective, for any $\uv \in \usK$ there exists a unique $\uxi \in \RcNQ_+$ such that
\begin{equation}
\label{eq:vandxi}
\uxi = \ug - \uB \uv.
\end{equation}
Noting from (\ref{eq:slack}) and (\ref{eq:vandxi}) that $\uu = \uB^{-1} (\ug-\us)$ and $\uv = \uB^{-1} (\ug-\uxi)$, we have
$ \uv - \uu = \uB^{-1}(\us-\uxi).$
Substituting this into (4.1) then yields
$$ (\us - \uxi)^T \uB^{-T} \uA\, \uB^{-1} (\ug - \us) \geq (\us - \uxi)^T \uB^{-T} \uf.$$ 
The desired result (\ref{eq:vi-du}) then directly follows from (\ref{eq:tilde}).  By a similar technique we can also readily show that (\ref{eq:vi-du}) implies (\ref{eq:vi-alg}).  We simply substitute (\ref{eq:slack}),  (\ref{eq:tilde}), and (\ref{eq:vandxi}) into (\ref{eq:vi-du}).  Rearrangement of terms then yields (\ref{eq:vi-alg}).  This completes the proof.  
\end{proof}

\subsection{Dual Approximation}
\label{sec:dual-approximation}

Let the approximation space $\usS_n \subset \RcNQ_+ $ be given in terms of basis functions $\uzeta_i$, $1 \leq i \leq n_\sS,$ by 
\begin{equation}
\usS_n := {\rm span}_+ \{ \uzeta_i \in \RcNQ_+, \ 1 \leq i \leq n_\sS\} 
= \{ \usigma \in \RcNQ_+ \ | \ \usigma = \textstyle\sum\limits_{i=1}^{n_\sS} \underline{\uc}_i \uzeta_i \ {\rm and} \ \underline{\uc} \in \mathbb{R}^{n_\sS}_+\}.
\end{equation}  
Here, we assume that the basis functions are linearly independent (though not necessarily orthogonal).  We now consider the following approximation to Problem \uPdu: 

\begin{problem}[\uPdun]
{\it Find $\us_n \in \usS_n$ such that
\begin{equation}
\label{eq:vi-du-app}
(\uxi{}_n-\us{}_n)^T \utA\, \us{}_n  \geq (\uxi{}_n-\us_n)^T \utf{}_n, \quad\forall\;\uxi{}_n\in \usS{}_n.
\end{equation}
If $\uA$ is symmetric positive-definite, then (\ref{eq:vi-du-app}) is the optimality condition of the corresponding minimization problem
\begin{equation}
\label{eq:min-alg-app-du}
\us_n = \arg \min_{\uxi_n \in \usS{}_n} {\textstyle \frac12} \uxi_n{}^T\utA\,\uxi_n - \uxi_n{}^T\utf.
\end{equation}
}

\end{problem}

\noindent 
Following the notation in Sec.~\ref{sec:alg_prob}, we note that
$ \us{}_n = \textstyle\sum_{i=1}^{n_\sS} \uus{}_{n,i} \uzeta{}_i$,
where $\uus{}_n \in\RcnQ_+$ is the solution of 
\begin{problem}[\uuPdun]
{\it Find $(\uus_n,\uulambda_n^\du) \in \mathbb{R}^{n_\sS} \times \mathbb{R}^{n_\sS}$ such that
\begin{subequations}
\label{eq:rkkt-du}
\begin{alignat}{7}
\label{eq:rkkt-du-1}
\uutA{}_n \,\uus{}_n + \uulambda_n^\du & \ = \ & & \ \uutf{}_n \\
\label{eq:rkkt-du-2}
\uus_n & \ \geq \ & & \ 0 \\
\label{eq:rkkt-du-3}
\uulambda{}_n^\du & \ \geq \ &&  \ 0 \\
\uus_n^T {\uulambda{}_n^\du} &  \ = \ & & \ 0,
\label{eq:rkkt-du-4}
\end{alignat}
\end{subequations}
where ${\uutA}{}_{n,ij} = \uzeta{}_i^T \utA\, \uzeta_j$,  and ${\uutf}{}_i = \uzeta{}^T_i{\utf}$, for $i,j = 1,\dots,n_{n_\sS}$.  }
\end{problem}

We now briefly remark on the properties of our dual problem and its approximation.  First, we note  from our assumptions on $A$, $B$, and $\uzeta_i$, that $\utA$ and $\utA_n$ are positive-definite and bounded.  Therefore, the existence, uniqueness, and boundedness of the solutions to \uPdu\ and \uPdun\  follows directly from Thm.~\ref{thm:LS67}. 

Second, we remark on a fundamental difference  between our primal problem \uuPprn\
(defined in Sec.~\ref{sec:primalapprox}) and our dual problem \uuPdun\ defined above.  We note that \uuPprn\ is obtained through an {\it optimize-then-discretize} approach: we optimize {\bf A1} to obtain {\bf A2} and {\Ppr}, and then discretize to obtain \uPpr.  We subsequently introduce an approximation  to \uPpr, thus obtaining an approximation $\lambda^\pr_n$ to the KKT multiplier $\lambda$.  

On the other hand, \uuPdun\ is derived using a {\it discretize-then-optimize} approach: we discretize {\bf A1} to obtain {\bf \underline{A1}}, re-write the problem in terms of the slack variable to obtain \Pdu, then optimize to obtain \uPdu.  An ``algebraic"  approximation is subsequently applied to \uPdu\ to obtain \uPdun.  Note that the KKT multiplier $\uulambda{}^\du_n$ serves only to enforce the constraint $\uus{}_n \geq 0$.   In other words,  $\uulambda{}^\du_n$ are {\it not} coefficients of  corresponding basis functions for the exact KKT multiplier $\lambda$, and \uPdun does {\it not} provide a direct approximation to $\lambda$.  
 
What then is the purpose of the dual problem?  Given $\us_n$, we let $s_n \in \sM'$ be given by $s_n =  \textstyle\sum_{i=1}^{\cN_\sV} \us_{n,i} (g-B\Phi_i),$ and  further define a new approximation, $u^\du_n$ to $u$ given by 
$u_n^\du = \textstyle\sum_{i=1}^{\cN_\sV} \uu^{\du}_{n,i} \Phi_i,$
and 
\begin{equation}
\uu^{\du}_n := \uB^{-1} \left( \ug-\us_n\right).
\label{eq:udun}
\end{equation} 
The purpose of this dual approximation can be better understood by considering\begin{corollary}
\label{thm:dual-app-u-sigma}  
For any $\sigma \in \sM'$, the function $u^\sigma \in \sV$ given by
\begin{equation}
\label{eq:stou}
u^\sigma := B^{-1}(g-\sigma) 
\end{equation}
is in $\sK$.  That is, it satisfies 
\begin{equation}
\label{eq:sineq}
\ang{g - Bu^\sigma,q}  \geq  0, \quad \forall \ q \in \sM.
\end{equation}
\end{corollary}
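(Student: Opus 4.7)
The plan is to observe that the statement is essentially a one-line consequence of the definition of $u^\sigma$ and the definition of the polar cone $\sM'$. The verification in $\sK$ amounts to checking the inequality (\ref{eq:sineq}), since $\sK$ was characterized in the excerpt precisely as the set of $v \in \sV$ satisfying $\ang{Bv,q} \leq \ang{g,q}$ for all $q \in \sM$.

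First I would apply $B$ to both sides of the defining relation (\ref{eq:stou}). Since $B$ is assumed bijective (as stated at the beginning of Sec.~\ref{sec:duality}), $B^{-1}$ is well-defined and we get $Bu^\sigma = g - \sigma$, hence
\begin{equation*}
g - Bu^\sigma = \sigma.
\end{equation*}
Substituting this identity into the left-hand side of (\ref{eq:sineq}) reduces the claim to showing that $\ang{\sigma,q} \geq 0$ for every $q \in \sM$.

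This last step is immediate from the definition (\ref{eq:cone}) of the dual cone $\sM'$: any $\sigma \in \sM'$ satisfies $\ang{\sigma,q} \geq 0$ for all $q \in \sM$ by construction. Consequently $\ang{g-Bu^\sigma,q} \geq 0$ for all $q \in \sM$, which means $u^\sigma \in \sK$.

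There is no real obstacle here; the only point requiring any care is making sure that $B^{-1}$ acts meaningfully on $g - \sigma \in \sQ'$ (so that $u^\sigma$ is well-defined in $\sV$), but this is precisely what the bijectivity assumption on $B:\sV \to \sQ'$ guarantees. The corollary is therefore best viewed as a restatement of the defining property of $\sM'$ through the change of variables $u \leftrightarrow \sigma = g - Bu$ that underlies the dual formulation.
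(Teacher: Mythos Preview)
Your proof is correct and follows essentially the same approach as the paper: rewrite $g - Bu^\sigma = \sigma$ from the defining relation and invoke the definition of $\sM'$ in (\ref{eq:cone}). The paper's proof is a terse two-line version of exactly this argument.
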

\begin{proof}
From (\ref{eq:stou}), we have $\sigma = g-Bu^\sigma$.  Eqn.~(\ref{eq:sineq}) therefore directly follows from the definition of $\sM'$ in (\ref{eq:cone}). \qquad
\end{proof}

\noindent We recall that the primal approximation $\uu^\pr_n \in \usK_n$  does {\it not} necessarily satisfy (\ref{eq:sineq}) (i.e., in general $\usK_n \not\subset \usK$).  This led to difficulties in Sec.~\ref{sec:primal} in the derivation of {\it a posteriori} error estimates, causing a loss of sharpness as well as necessitating the introduction of the nonlinear projection $\Pi$ and the use of the Cauchy-Schwarz inequality in (\ref{eq:cauchy-schwarz}).   On the other hand, the dual approximation (\ref{eq:udun}) provides strictly feasible approximations  $\uu^\du_n$ to $\uu$ by virtue of Corollary~\ref{thm:dual-app-u-sigma}.   As we shall see in the next section, this fact greatly simplifies the development of a posteriori error bounds.  

\subsection{Error Estimation via Duality}
We begin by defining the residual 
\begin{equation}
r(v) := \ang{f,v} - \ang{Au^\du_n,v} - \ang{B v, \lambda^\pr_n}, \qquad v \in \sV.
\end{equation}
where $u_n^\du$ is defined as in the previous section and $\lambda_n^\pr$ is the solution to \Pprn.  We then derive error bounds for our primal-dual approximation $(u^\du_n,\lambda^\pr_n)$ in  
\begin{proposition}  Let
\label{prop:dual-error-bound}
\begin{equation}
\label{eq:duerr}
\tilde{d}_1 :=  \frac{\| r \|_{\sV'}}{2\alpha} \qquad
\tilde{d}_2 := \frac{\ang{s_n,\lambda_n^\pr}}{\alpha} .
\end{equation}
The errors can then be bounded by 
\begin{subequations}
\begin{alignat}{8}
\label{eq:errbnd-u-pr-du}
\|u-u_n^\du\|_\sV & \leq & \ & \tilde{\Delta}_u^{\pr,\du} & \ & := &  \ & \tilde{d}_1 + \sqrt{\tilde{d}_2^2 + \tilde{d}_2} \\
\label{eq:errbnd-l-pr-du}
\|\lambda-\lambda_n^\pr\|_\sQ & \leq & \ & \tilde{\Delta}_\lambda^{\pr,\du} & \ & := &  \ & \frac{1}{\beta} \left( \| r \|_{\sV'} + \gamma \tilde{\Delta}_u^{\pr,\du} \right) 
\end{alignat}
\end{subequations}
\end{proposition}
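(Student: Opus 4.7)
The plan is to mirror the structure of the proof of Prop.~\ref{prop:HSW}, but to exploit the fact, guaranteed by Corollary~\ref{thm:dual-app-u-sigma}, that $u_n^\du$ is strictly feasible: $\ang{g - B u_n^\du, q} \geq 0$ for every $q \in \sM$. This feasibility will let me control the cross term in the energy identity \emph{directly}, without resorting to the projection $\Pi$ or the lossy Cauchy--Schwarz step (\ref{eq:cauchy-schwarz}) that were needed in the primal-only analysis; the payoff is that $\ang{s_n, \lambda_n^\pr}$ replaces the less sharp $\delta_2$.

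I would begin with coercivity, $\alpha\|u-u_n^\du\|_\sV^2 \leq \ang{A(u-u_n^\du), u-u_n^\du}$, and, using the primal equality (\ref{eq:gkkt1}) to substitute $\ang{Au,v} = \ang{f,v} - \ang{Bv,\lambda}$, rewrite the right-hand side as $r(u-u_n^\du) - \ang{B(u-u_n^\du), \lambda-\lambda_n^\pr}$. The residual term is bounded by $\|r\|_{\sV'}\|u-u_n^\du\|_\sV$ in the usual way. The main work lies in the cross term. Using $B u_n^\du = g - s_n$ from (\ref{eq:udun}) together with the primal complementarity $\ang{g-Bu,\lambda}=0$ of (\ref{eq:gkkt4}), a short bookkeeping gives
\begin{equation*}
-\ang{B(u-u_n^\du),\, \lambda-\lambda_n^\pr} \;=\; -\ang{s_n,\lambda} \;-\; \ang{g-Bu,\,\lambda_n^\pr} \;+\; \ang{s_n,\lambda_n^\pr}.
\end{equation*}
Since $s_n \in \sM'$ and $\lambda \in \sM$, the first term is $\leq 0$; since $u \in \sK$ forces $g-Bu \in \sM'$ and $\lambda_n^\pr \in \sM$, so is the second. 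Hence the cross term is bounded above by $\ang{s_n, \lambda_n^\pr}$.

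Combining these two bounds, dividing by $\alpha$, and invoking the definitions (\ref{eq:duerr}) yields the quadratic inequality
\begin{equation*}
\|u-u_n^\du\|_\sV^2 \;-\; 2\tilde{d}_1 \|u-u_n^\du\|_\sV \;-\; \tilde{d}_2 \;\leq\; 0,
\end{equation*}
and solving for the positive root yields the primal bound (\ref{eq:errbnd-u-pr-du}) (with the radicand reading as $\tilde{d}_1^2 + \tilde{d}_2$). For the KKT-multiplier estimate (\ref{eq:errbnd-l-pr-du}) I would adapt Lemma~\ref{lem:dual-err}: starting from the inf-sup bound $\beta\|\lambda-\lambda_n^\pr\|_\sQ \leq \sup_{v\in\sV}\ang{Bv, \lambda-\lambda_n^\pr}/\|v\|_\sV$ and again using (\ref{eq:gkkt1}) to decompose $\ang{Bv, \lambda-\lambda_n^\pr} = r(v) - \ang{A(u-u_n^\du), v}$, I bound each piece by $\|r\|_{\sV'}$ and $\gamma\|u-u_n^\du\|_\sV$, respectively, and then substitute the primal estimate just established.

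The principal obstacle is the cross-term identity above: one must \emph{simultaneously} deploy the primal complementarity at the exact solution $(u,\lambda)$ \emph{and} the feasibility of $u_n^\du$, and it is precisely this pairing --- unavailable in the primal-only setting, where in general $u_n^\pr \notin \sK$ --- that collapses what would otherwise require a nonlinear projection into the single, directly computable quantity $\ang{s_n, \lambda_n^\pr}$.
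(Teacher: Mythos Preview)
Your argument is correct and follows essentially the same route as the paper: coercivity, the residual identity coming from (\ref{eq:gkkt1}), and then control of the cross term using complementarity at the exact pair together with feasibility of $u_n^\du$, leading to the same quadratic inequality and the same appeal to the inf--sup bound for $\lambda$. The only cosmetic difference is that the paper splits the cross term as $-\ang{B(u-u_n^\du),\lambda}+\ang{B(u-u_n^\du),\lambda_n^\pr}$ and bounds each piece separately, whereas you write the full algebraic identity first; the ingredients and the resulting bound $\ang{s_n,\lambda_n^\pr}$ are identical, and your parenthetical remark that the radicand should read $\tilde d_1^{\,2}+\tilde d_2$ is indeed what the quadratic inequality actually gives.
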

\begin{proof}
We shall again omit the superscripts ``pr" and ``du" in this proof.  We thus emphasize that here, $(u_n,\lambda_n)$ refers to our {\it primal-dual} approximation $(u_n^\du,\lambda_n^\pr)$.  From (\ref{eq:rkkt-du}), (\ref{eq:gkkt1}), (\ref{eq:alpha-gamma}) we then have 
\begin{equation}
\alpha \| u-u_n\|^2_{\sV} \leq  r(u-u_n) - \ang{B(u-u_n),\lambda-\lambda_n} .
\end{equation}
We then note from (\ref{eq:gkkt4}) and (\ref{eq:sineq}) that 
$\ang{B(u-u_n),\lambda} = \ang{g-Bu_n,\lambda} \geq 0,$
and that 
$\ang{B(u-u_n),\lambda_n} \leq \ang{g,\lambda_n} - \ang{g-s_n,\lambda_n} = \ang{s_n,\lambda_n}.$
It thus follows that 
\begin{equation}
\label{eq:sbound}
\alpha \| u-u_n\|^2_{\sV} \leq  \|r\|_{\sV'}\|u-u_n\|_{\sV}+\ang{s_n,\lambda_n} .
\end{equation}
Using (\ref{eq:duerr})  and  solving the quadratic inequality (\ref{eq:sbound}), we obtain (\ref{eq:errbnd-u-pr-du}).  The remaining result (\ref{eq:errbnd-l-pr-du}) follows directly from Lemma \ref{lem:dual-err} and (\ref{eq:errbnd-u-pr-du}).  \qquad 
\end{proof}

\section{Application to the Reduced Basis Method}
\label{sec:rb}

As indicated in the introduction, we now apply the techniques in Sections \ref{sec:primal} and  \ref{sec:duality} to the reduced basis method.  The RB method is a model order reduction technique intended for use in real-time optimisation, control, or characterisation of systems governed by parametrized partial differential equations.    
The RB method constructs inexpensive yet certified surrogates for the exact (i.e., ``truth") solution by focusing on the solution manifold induced by the parametrized PDE. Rigorous a posteriori error bounds are then derived based on relaxations of the error-residual equation.  

Typically, both RB approximations and error bounds are computed using an offline-online strategy enabling highly efficient (i.e., at marginal online cost) computations of the approximations and error bounds. Also, RB approximations and error bounds are, in practice, intimately linked through a greedy approach, in which the (online-) inexpensive error bounds are used to construct the subsequent approximation spaces systematically and (quasi-)optimally (see, e.g., \cite{BMP+2012,BCD+2011}).  In order to facilitate the comparison of the proposed and existing approaches, however, we shall neither be discussing nor applying the greedy approach in this work.  

In Sec.~\ref{sec:prob-state}, we state the problem and present the required assumptions on the nature of the parametric dependence of the PDE.  In the subsequent sections, we summarise the key elements for the RB approximation of the primal problem  in Sec.~\ref{sec:primal-approx} (\cite{HSW2012}), the dual problem in Sec.~\ref{sec:dual-approx}, and for the RB error estimation using the primal-dual approach in Sec.~\ref{sec:a-post-error}.  Here, our focus is on the choice of the approximation spaces, on the required bounds to the coercivity and continuity constants, and on the offline-online computational procedure.  

\subsection{Problem Statement}
\label{sec:prob-state}

Let $\cD \subset \mathbb{R}^{p}$ be a prescribed $p$-dimensional, compact parameter set. We introduce a parameter $\mu \in \cD \subset \mathbb{R}^{p}$, and assume that $A$, $f$, and $g$ depend affinely on $\mu$:  
\begin{equation}
\label{eq:affine}
A(\mu) = \textstyle\sum\limits_{k=1}^{Q_a} \Theta_a^k (\mu) A^k, \qquad
f(\mu)= \textstyle\sum\limits_{k=1}^{Q_f} \Theta_f^k (\mu) f^k, \qquad
g(\mu)= \textstyle\sum\limits_{k=1}^{Q_g} \Theta_g^k (\mu) g^k ,
\end{equation}
where $Q_a, Q_f, Q_g \in \mathbb{N}$ are assumed to be small, and  
the parameter-dependent coefficient functions $\Theta_a^k(\mu)$, $\Theta_f^k(\mu)$, and $\Theta_g^k(\mu)$ are continuous over the parameter set $\cD$.  We also assume that the mappings $A^k:\sV \to \sV'$, $f^k:\sV \to \mathcal{R}$, and $g^k:\sQ \to \mathcal{R}$ are parameter-independent, linear, and continuous.

Furthermore, we assume that for all $\mu \in \cD$, $A(\mu)$ satisfies (\ref{eq:alpha-gamma}) with continuity constant $\gamma(\mu)$ and coercivity constant $\alpha(\mu)$, and also that $f(\mu) \in \sV'$ and $g(\mu) \in \sQ'$.  Finally, as indicated in the introduction, we assume that $B$ is parameter-{\it independent}.  

We  consider the parametrized forms of our primal problem \Ppr \ (see \cite{HSW2012})
\begin{problem}[\Ppr$(\mu)$]
\label{pr:mixed-mu}
Find $(u(\mu),\lambda(\mu)) \in \sV \times \sM$ such that
\begin{subequations}
\label{eq:spp-mu}
\begin{alignat} {6}
\label{eq:spp1-mu}
\ang{A(\mu)u(\mu),v}  + \ang{Bv, \lambda(\mu)} & = & \ &  \ang{f(\mu),v},  & & \quad\forall\;v\in \sV \\
\label{eq:spp2-mu}
\ang{Bu(\mu),q-\lambda(\mu)} & \leq & \ & \ang{g(\mu),q-\lambda(\mu)}, & & \quad\forall\;q\in \sM;
\end{alignat}
\end{subequations}
\end{problem}
and of our dual problem  \uPdu
\begin{problem}[\bf \uPdu$(\mu)$]
{\it Find $\us(\mu) \in \RcNQ_+$ such that 
\begin{equation}
\label{eq:vi-du-mu}
(\uxi-\us(\mu))^T \utA(\mu) \us(\mu)  \geq (\uxi-\us(\mu))^T \utf(\mu), \quad\forall\;\uxi\in  \RcNQ_+,
\end{equation}
where $\utA(\mu):\RcNQ \to \RcNQ$ and $\utf \in \RcNQ$ are given by
$$
\utA(\mu) := \uB^{-T}\uA(\mu) \uB^{-1} \qquad
\utf(\mu) := \uB^{-T}\uA(\mu) \uB^{-1} \ug(\mu) - \uB^{-T} \uf(\mu).
$$}
\end{problem}
We now consider primal-only  and primal-dual reduced basis approximations to (\ref{eq:spp-mu}) and (\ref{eq:vi-du-mu}) based on the methods of Sec.~\ref{sec:primal} and \ref{sec:duality}, respectively.  

\subsection{Primal Approximation and A Posteriori Error Estimation}
\label{sec:primal-approx}

As discussed in Sec.~\ref{sec:primal}, the (primal) reduced basis spaces $\sV_n \subset \sV$ and $\sQ_n \subset \sQ$ must be chosen such that the associated inf-sup constant $\beta_n > 0$.  
We first choose $\sQ_n$ to be the space spanned by ``snapshots" of the multiplier $\lambda$ for $n$ values of the parameter:
\begin{equation}
\label{eq:rb-pr-lambda}
\sQ_n = {\rm span} \{ \psi_i \in \sM, \ 1 \leq i \leq n_\sQ \} = {\rm span} \{ \lambda(\mu_i), ~i=1,\ldots,n\}. 
\end{equation}  
Here, the $\psi_i$ are chosen to be linearly independent but not necessarily orthogonal, and therefore (in general) $n_\sQ \leq n.$  We next choose $\sV_n$ to be the space spanned by snapshots of the field variable $u$ {\it and} by additional ``supremizing" functions $t_k$: 
\begin{eqnarray}
\label{eq:rb-pr-u}
\sV_n & = & {\rm span} \{ \varphi_j \in \sV, \ 1 \leq j \leq n_\sV\} \\ 
& = & {\rm span} \{ u(\mu_j), t_k, ~j=1,\ldots,n,~k=1,\ldots,n_{\sup{}} \},
\end{eqnarray}
Here, the $\varphi_j$ are assumed to be mutually orthogonal, that is, they are computed using a Gram-Schmidt orthogonalization procedure.  Furthermore, $n_{\sup}$ is the number of additional ``supremizing functions" required to ensure inf-sup stability.  For more details on the choice of the supremizing functions, we refer the reader to \cite{RV2007} and \cite{GV2011a}; in the special case of the model problems to be discussed in Sec.~\ref{sec:example}, we also refer to \cite{HSW2012}.

We further define the reduced convex cone \cite{HSW2012}
\begin{equation}
\label{eq:rb-pr-cone}
\sM_n  := {\rm span}_+ \{ \psi_i,~i=1,\ldots,Nn\}.
\end{equation}
Our reduced basis approximation $(u_n(\mu),\lambda_n(\mu))\in \sV_n \times \sQ_n$ to $(u(\mu),\lambda(\mu)$) is then given by the parametrized form of {\bf P}$_n^\pr$ \cite{HSW2012}:
\begin{subequations}
\label{eq:pprn-mu}
\begin{alignat}{7} 
\label{eq:spp-rb}
\ang{ A(\mu)u_n(\mu),v_n } + \ang{Bv_n, \lambda_n(\mu)}  \ & = & & \ \ \ang{f(\mu),v_n}, \quad && \forall\;v_n\in \sV_n, \\
\ang{B u_n(\mu),q_n-\lambda_n(\mu)} \ & \leq & & \ \ \ang{g(\mu),q_n-\lambda_n(\mu)}, \qquad && \forall\;q_n\in \sM_n.
\end{alignat}
\end{subequations}
Equivalently, the coefficients (with respect to the basis functions $\varphi_i, \psi_j$) can be obtained by solving the parametrized form of \uuPprn: Find $(\underline{\underline{u}}{}_n(\mu),\underline{\underline{\lambda}}{}_n(\mu)) \in \RcnV\times \RcnQ$ such that
\begin{subequations}
\label{eq:kkt-pr-rb}
\begin{eqnarray} 
{ \uuA{}_n(\mu){\uuu}{}_n(\mu)} + {\uuB{}_n^T \uulambda{}_n(\mu)} & \ = \ &  {\uuf{}_n(\mu)} \\
\uug{}_n(\mu) - \uuB{}_n \uuu{}_n(\mu)& \geq & 0 \\
\uulambda{}_n(\mu)& \geq & 0 \\
\uulambda{}^T_n(\mu) \big( \uug{}_n(\mu) - \uuB{}_n \uuu{}_n(\mu)\big) & = & 0,
\end{eqnarray}
\end{subequations}
where, for $i,j = 1,\dots,{n_\sV}$ and $k=1,\dots,{n_\sQ}$, 
\begin{subequations}
\label{eq:kkt-pr-rb-def}
\begin{alignat}{12}
{\uuA}{}_{n,ij}(\mu) & = & \ & \uvarphi{}_i^T \uA(\mu) \, \uvarphi_j, & \qquad & {\uuB}{}_{n,jk} & = & \ & \upsi{}_j^T \uB\, \uvarphi_k, \\
{\uuf}{}_i(\mu)  & = & \ & \uvarphi{}^T_i{\uf}(\mu), & \qquad &
{\uug}{}_k(\mu)  & = & \ & \upsi{}^T_k{\ug}(\mu).
 \end{alignat}
\end{subequations}

With our assumptions on $\sQ_n$ and $\sV_n$ (i.e., on inf-sup stability and on the linear independence of the corresponding basis functions), it follows that Corollary~\ref{cor:bhr} holds and a unique solution exists.  

We now apply the approach presented in \cite{HSW2012} and summarised in Sec.~\ref{sec:primal} to compute {\it a posteriori} error bounds for the (primal-only) reduced basis approximation $(u^\pr_n(\mu),\lambda^\pr_n(\mu))$.

We assume that for all $\mu \in \cD$, we have computationally inexpensive lower (respectively, upper) bounds to the truth coercivity (resp., continuity) constant:  
\begin{equation}
\label{eq:lb-ub}
\alpha_{\rm LB}(\mu) \leq \alpha(\mu), \qquad \gamma_{\rm UB}(\mu) \geq \gamma(\mu).
\end{equation}
Note that the inf-sup constant $\beta$ does not depend on the parameter  since $B$ is assumed to be $\mu$-independent.  Applying Prop.~\ref{prop:HSW}, we then have
\begin{corollary}   For $\mu \in \cD$, let
\begin{equation*}
\label{eq:res-est-mu}
\delta_0(\mu) := \| r_{\rm e}(\mu)\|_{\sV'}, \qquad \delta_1:= \| \Pi({e}_{\rm i}(\mu)) \|_\sQ, \qquad \delta_2(\mu) := \big({\lambda_n(\mu),\Pi({e}_{\rm i}(\mu))\big)_\sQ}
\end{equation*}
and 
\begin{equation*}
\label{eq:pr-error-constant}
{c}_1(\mu) := \frac{1}{2\alpha_{\rm LB}(\mu)} \left( \delta_0 + \frac{\gamma_{\rm UB}(\mu) \delta_1  }{\beta}\right),\quad
{c}_2(\mu) :=  \frac{1}{\alpha_{\rm LB}(\mu)} \left( \frac{\delta_0(\mu)\delta_1(\mu)}{\beta}   + \delta_2(\mu)\right).
\end{equation*}
The errors in $(u_n^\pr(\mu),\lambda_n^\pr(\mu))$ with respect to $(u(\mu),\lambda(\mu))$ can then be bounded by 
\begin{subequations}
\begin{alignat}{8}
\label{eq:pr-bound-u}
\|u(\mu)-u_n^\pr(\mu)\|_\sV & \leq & \ & {\Delta}_u^{\pr}(\mu) & \ & := &  \ & {c}_1(\mu) + \sqrt{{c}_1^2(\mu) + {c}_2}(\mu), \\
\label{eq:pr-bound-l}
\|\lambda(\mu)-\lambda_n^\pr(\mu)\|_\sQ & \leq & \ & {\Delta}_\lambda^{\pr}(\mu) & \ & := &  \ & \frac{1}{\beta} \left( \delta_0(\mu) + \gamma_{\rm UB}(\mu) {\Delta}_u^{\pr}(\mu) \right).
\end{alignat}
\end{subequations}
\end{corollary}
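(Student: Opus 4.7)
The corollary is essentially a parametric restatement of Proposition~\ref{prop:HSW}, so the plan is to apply that proposition pointwise in $\mu$ and then replace the true coercivity and continuity constants by their computable bounds. The main work is checking that the hypotheses carry over and that the replacement is monotone in the right direction.

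First I would fix an arbitrary $\mu \in \cD$. By the assumptions of Section~\ref{sec:prob-state}, $A(\mu)$ satisfies \eqref{eq:alpha-gamma} with constants $\alpha(\mu)$ and $\gamma(\mu)$, $f(\mu) \in \sV'$, and $g(\mu) \in \sQ'$; furthermore, since $B$ is $\mu$-independent, the inf-sup constant $\beta$ also is, and the reduced spaces $\sV_n, \sQ_n, \sM_n$ have been chosen so that Corollary~\ref{cor:bhr} applies. Thus all the hypotheses needed to derive Proposition~\ref{prop:HSW} hold for the parametrized problem at $\mu$, with $u$, $\lambda$, $u_n$, $\lambda_n$, $r_{\rm e}$, $r_{\rm i}$, $A$, $f$, $g$ replaced throughout by their $\mu$-dependent counterparts.

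Next I would apply Proposition~\ref{prop:HSW} directly at $\mu$ to obtain
\begin{equation*}
\|u(\mu)-u_n^\pr(\mu)\|_\sV \leq \hat{c}_1(\mu) + \sqrt{\hat{c}_1(\mu)^2 + \hat{c}_2(\mu)},
\end{equation*}
with $\hat{c}_1(\mu), \hat{c}_2(\mu)$ defined as in \eqref{eq:pr-constants-hat} but with the true constants $\alpha(\mu)$ and $\gamma(\mu)$. The remaining task is to replace these by their certified bounds. From \eqref{eq:lb-ub} we have $1/\alpha(\mu) \leq 1/\alpha_{\rm LB}(\mu)$ and $\gamma(\mu) \leq \gamma_{\rm UB}(\mu)$, and inspection of \eqref{eq:pr-constants-hat} shows that $\hat{c}_1$ and $\hat{c}_2$ are both nondecreasing in $1/\alpha$ and in $\gamma$ (with all other quantities nonnegative). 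Hence $\hat{c}_1(\mu) \leq c_1(\mu)$ and $\hat{c}_2(\mu) \leq c_2(\mu)$, and since $x \mapsto x + \sqrt{x^2+y}$ is monotone in $x \geq 0$ and in $y \geq 0$, the bound \eqref{eq:pr-bound-u} follows. The bound \eqref{eq:pr-bound-l} on $\|\lambda(\mu)-\lambda_n^\pr(\mu)\|_\sQ$ then follows from Lemma~\ref{lem:dual-err} applied at $\mu$ together with the monotonicity $\gamma(\mu)\leq \gamma_{\rm UB}(\mu)$ and \eqref{eq:pr-bound-u}.

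There is no substantive obstacle in this proof; the only point deserving explicit attention is the monotonicity argument used to justify replacing $\alpha(\mu)$ and $\gamma(\mu)$ by $\alpha_{\rm LB}(\mu)$ and $\gamma_{\rm UB}(\mu)$, which is why I would spell it out rather than simply invoke Proposition~\ref{prop:HSW} and declare the result. Since the parametric dependence otherwise plays no role beyond carrying $\mu$ as a passive label, the proof can be stated in a single short paragraph once this monotonicity is noted.
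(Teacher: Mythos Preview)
Your proposal is correct and matches the paper's approach exactly: the paper treats this corollary as an immediate consequence of Proposition~\ref{prop:HSW} (stating only ``Applying Prop.~\ref{prop:HSW}, we then have'') with the constants $\alpha(\mu),\gamma(\mu)$ replaced by their bounds from~\eqref{eq:lb-ub}. Your explicit monotonicity argument for this replacement is a welcome addition that the paper leaves implicit.
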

As mentioned in Sections~\ref{sec:primal} and \ref{sec:duality}, the ``exact" inequality constraint (\ref{eq:gkkt2}) is in general not satisfied by our primal-only approximation, thus leading to difficulties in error estimation.  We thus pursue the primal-dual approach of Sec.~\ref{sec:duality} to obtain strictly feasible approximations to $u(\mu)$ and associated simpler {\it a posteriori} error bounds.

\subsection{Dual Approximation}
\label{sec:dual-approx}

We define our dual reduced basis space $\usS_n \subset \RcNQ_+$ to be the span of snapshots of the slack variable $s$:
\begin{equation}
\usS_n = {\rm span}_+ \{ \uzeta_i \in \RcNQ_+, \ 1 \leq i \leq n_\sS \} = {\rm span}_+ \{ \us(\mu_i) \in \RcNQ_+, ~i=1,\ldots,n_\sS'\}. 
\end{equation}
Our reduced basis approximation $\us_n(\mu)$ to $\us(\mu)$ is then given by: 
\begin{problem}[\uPdun$(\mu)$]
{\it Find $\us_n(\mu) \in \usS_n$ such that 
\begin{equation}
\label{eq:vi-du-mu-n}
(\uxi-\us_n(\mu))^T \utA(\mu) \us_n(\mu)  \geq (\uxi-\us_n(\mu))^T \utf(\mu), \quad\forall\;\uxi\in  \usS_n.
\end{equation}}
\end{problem}
Equivalently, the coefficients (with respect to the basis functions $\uzeta_i$) can be obtained by solving the parametrized form of \uuPdun: 
Find $(\uus_n(\mu),\uulambda_n^\du(\mu)) \in \mathbb{R}^{n_\sS} \times \mathbb{R}^{n_\sS}$ such that
\begin{subequations}
\label{eq:kkt-du-rb-sys}
\begin{alignat}{7}
\uutA{}_n(\mu) \,\uus{}_n(\mu) + \uulambda{}_n^\du(\mu) & \ = \ & & \ \uutf{}_n(\mu) \\
\uus{}_n(\mu) & \ \geq \ & & \ 0 \\
\uulambda{}_n^\du(\mu) & \ \geq \ &&  \ 0 \\
\uus{}_n^T(\mu) {\uulambda{}_n^\du(\mu)} &  \ = \ & & \ 0,
\end{alignat}
\end{subequations}
where, for $i,j = 1,\dots,n_{\sS}$, 
\begin{equation} 
\label{eq:tilde-reduced}
{\uutA}{}_{n,ij} =  \uzeta{}_i^T \utA\, \uzeta_j, \qquad \qquad
{\uutf}{}_i = \uzeta{}^T_i{\utf}.
\end{equation}  
Under the assumption that the basis functions $\uzeta_i$ are linearly independent, Theorem~\ref{thm:LS67} holds and there exists a unique, bounded solution to \uPdun$(\mu)$.  

As before, given the approximation $s_n(\mu)$, we then compute the corresponding approximation $u^\du(\mu)$ to $u(\mu)$ by
\begin{equation}
u^{\du}(\mu) = B^{-1}\big(g(\mu)-s_n(\mu)\big).
\end{equation}
In contrast to $u^\pr(\mu)$, the dual approximation satisfies $u^\du(\mu) \in \sK$  for all $\mu \in \cD$.

\subsection{Primal-Dual A Posteriori Error Estimation}
\label{sec:a-post-error}

We now apply the approaches presented in Sec.~\ref{sec:duality} to compute computationally inexpensive {\it a posteriori} error bounds for our primal-dual reduced basis approximation $(u_n^\du(\mu),\lambda_n^\pr(\mu))$.  Using (\ref{eq:lb-ub}) and applying Prop.~\ref{prop:dual-error-bound}, we then have
\begin{corollary}  For $\mu \in \cD$, let
\begin{subequations}
\label{eq:du-error-constant}
\begin{eqnarray}
d_1(\mu) := \frac{\| r(\mu) \|_{\sV'}}{2\alpha_{\rm LB}(\mu)}, \qquad d_2(\mu) :=  \frac{\ang{s_n(\mu),\lambda_n^\pr(\mu)}}{\alpha_{\rm LB}(\mu)} .
\end{eqnarray}
\end{subequations}
The errors in $(u_n^\du(\mu),\lambda_n^\pr(\mu))$ with respect to $(u(\mu),\lambda(\mu))$ can then be bounded by 
\begin{subequations}
\begin{alignat}{8}
\label{eq:du-bound-u}
\|u(\mu)-u_n^\du(\mu)\|_\sV & \leq & \ & {\Delta}_u^{\pr,\du}(\mu) & \ & := &  \ & d_1(\mu) + \sqrt{d_2(\mu)^2 + d_2(\mu)}, \\
\label{eq:du-bound-l}
\|\lambda(\mu)-\lambda_n^\pr(\mu)\|_\sQ & \leq & \ & {\Delta}_\lambda^{\pr,\du}(\mu) & \ & := &  \ & \frac{1}{\beta} \left( \| r(\mu) \|_{\sV'} + \gamma_{\rm LB}(\mu) {\Delta}_u^{\pr,\du}(\mu) \right).
\end{alignat}
\end{subequations}
\end{corollary}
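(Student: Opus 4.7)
The plan is to invoke Proposition~\ref{prop:dual-error-bound} pointwise for each parameter $\mu \in \cD$, applied to the primal-dual RB pair $(u_n^\du(\mu), \lambda_n^\pr(\mu))$, and then to replace the truth constants $\alpha(\mu)$ and $\gamma(\mu)$ by the computable surrogates $\alpha_{\rm LB}(\mu)$ and $\gamma_{\rm UB}(\mu)$ via a simple monotonicity argument.

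First I would check that the hypotheses of Proposition~\ref{prop:dual-error-bound} are satisfied at each $\mu$. Coercivity and continuity of $A(\mu)$ with constants $\alpha(\mu), \gamma(\mu)$ are standing assumptions from Sec.~\ref{sec:prob-state}; the inf-sup constant $\beta$ carries no $\mu$-dependence since $B$ is parameter-independent; the primal RB pair $(u_n^\pr(\mu), \lambda_n^\pr(\mu))$ is well defined by the parametrized form of Corollary~\ref{cor:bhr}; and the dual approximation $u_n^\du(\mu) = B^{-1}(g(\mu)-s_n(\mu))$ is strictly feasible by Corollary~\ref{thm:dual-app-u-sigma}, because $\us_n(\mu) \in \usS_n \subset \RcNQ_+$ by construction, so that the corresponding $s_n(\mu)$ lies in $\sM'$. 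Invoking Proposition~\ref{prop:dual-error-bound} at $\mu$ then yields the bounds (\ref{eq:errbnd-u-pr-du})--(\ref{eq:errbnd-l-pr-du}) with $\alpha(\mu)$ and $\gamma(\mu)$ entering the constants.

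Next I would relax these bounds using (\ref{eq:lb-ub}). Observe that both
\[
\tilde d_1(\mu) = \frac{\|r(\mu)\|_{\sV'}}{2\alpha(\mu)}, \qquad
\tilde d_2(\mu) = \frac{\ang{s_n(\mu), \lambda_n^\pr(\mu)}}{\alpha(\mu)}
\]
are non-negative — the pairing $\ang{s_n(\mu), \lambda_n^\pr(\mu)}$ is non-negative since $s_n(\mu) \in \sM'$ and $\lambda_n^\pr(\mu) \in \sM$ — and both are non-increasing in $\alpha(\mu)$. Since the scalar map $(x,y) \mapsto x + \sqrt{x^2 + y}$ is non-decreasing in each argument on the non-negative quadrant, substituting $\alpha_{\rm LB}(\mu) \leq \alpha(\mu)$ can only enlarge the right-hand side, giving the computable bound $\Delta_u^{\pr,\du}(\mu)$ in (\ref{eq:du-bound-u}). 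For the multiplier estimate, the map $X \mapsto \tfrac{1}{\beta}(\|r(\mu)\|_{\sV'} + \gamma\, X)$ is non-decreasing in $\gamma \geq 0$ and in $X \geq 0$, so combining $\gamma_{\rm UB}(\mu) \geq \gamma(\mu)$ with the just-established bound on $\|u(\mu) - u_n^\du(\mu)\|_\sV$ yields (\ref{eq:du-bound-l}).

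There is no real obstacle here: the corollary is a parametrized restatement of Proposition~\ref{prop:dual-error-bound} with the inaccessible constants replaced by efficiently computable surrogates. The only point requiring mild care is the monotonicity bookkeeping, together with the observation that $\beta$ needs no parametric replacement because $B$ is parameter-independent, which is what makes the offline-online decomposition of the error estimator tractable in the first place.
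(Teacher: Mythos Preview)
Your proposal is correct and follows exactly the route the paper takes: the paper introduces this corollary with the single sentence ``Using (\ref{eq:lb-ub}) and applying Prop.~\ref{prop:dual-error-bound}, we then have,'' and you have simply spelled out what that sentence means --- verify the hypotheses of Proposition~\ref{prop:dual-error-bound} hold at each fixed $\mu$, apply it, and then relax $\alpha(\mu)\to\alpha_{\rm LB}(\mu)$ and $\gamma(\mu)\to\gamma_{\rm UB}(\mu)$ by monotonicity. Your observation that $\gamma_{\rm UB}$ (not $\gamma_{\rm LB}$, as printed in (\ref{eq:du-bound-l})) is what the argument actually requires is also correct and consistent with the analogous primal-only corollary.
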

Now that we have developed our reduced basis approximations and corresponding {\it a posteriori\/} error bounds, we turn to the issue of computational efficiency.  

\subsection{Offline-Online Computational Procedure}

In the reduced basis approach, the offline-online computational strategy relies on the affine $\mu$-dependence of the quantities involved.  With this assumption, all $\mu$-independent quantities (for example, in (\ref{eq:affine}) can be formed and stored within a computationally expensive {\it offline} phase.  This stage, the cost of which depends on the large finite element dimension $\cN$, is performed only once.  For any given parameter $\mu \in \cD$, the RB approximation is then computed in a highly efficient online phase.  Ideally, the computational cost of the online phase would not depend on $\cN$ but only on the considerably smaller dimension of the RB approximation space.  

Much of this machinery is by now standard in RB methods (see, for example, \cite{RHP2008}).  However, we note that the techniques we present here are non-standard; we must thus elaborate on the offline-online computational decomposition for our primal-dual approach in greater detail.  For more details on the primal-only approach, we refer the reader to \cite{HSW2012}.

For clarity, we discuss the approximation and {\it a posteriori} error estimation stages separately.  We begin with the {\it approximation} stage.  From (\ref{eq:tilde}), (\ref{eq:affine}), (\ref{eq:kkt-pr-rb-def}), and (\ref{eq:tilde-reduced}), we note that
\begin{subequations}
\label{eq:affine-primal-dual}
\begin{alignat}{12}
\uuA{}_{n,ij}(\mu) & = &\ & \textstyle\sum\limits_{q=1}^{Q_a} \Theta_a^q(\mu) \ang{A^q\varphi_j,\varphi_i}, & \qquad & & 
\uutA{}_{n,lm}(\mu) & = & \ &\textstyle\sum\limits_{q=1}^{Q_a} \tTheta_a^q(\mu) \ang{\tA^q\zeta_m,\zeta_l}, \\
\uuf{}_{n,i}(\mu)& = &\ & \textstyle\sum\limits_{q=1}^{Q_f} \Theta_f^q (\mu) \ang{f^q,\varphi_i}, & \qquad & & 
\uutf{}_{n,l}(\mu)& = &\ & \textstyle\sum\limits_{q=1}^{\tQ_f} \tTheta_f^q (\mu) \ang{\tf^q,\zeta_l}, \\
\uug{}_{n,k}(\mu)& = &\ & \textstyle\sum\limits_{q=1}^{Q_g} \Theta_g^q (\mu) \ang{g^q,\psi_k} , 
\end{alignat}
\end{subequations}
where $\tQ_f = Q_f + Q_aQ_g$,  $\tA^q := B^{-T}A^q B^{-1},$ and 
\begin{equation*}
\tf^q := 
\begin{cases}
-B^{-T}f^q, & q = 1,\dots,Q_f \\
B^{-T}A^{q'}B^{-1} g^{q''},  & 1 \leq q' \leq Q_a, \ 1 \leq q'' \leq Q_g, \  q = Q_f + (q'-1)Q_g + q''.
\end{cases}
\end{equation*}
Thus, in the {\it offline} stage, we solve \uPpr and \uPdu (i.e., (\ref{eq:kkt}) and (\ref{eq:vi-du})) for the snapshots and compute the basis functions  $\uvarphi{}_i,\ i=1,\dots,n_\sV$, $\upsi{}_k,\ k=1,\dots,n_\sQ$, and $\uzeta{}_l, \ l=1,\dots,n_\sS$.  We then compute and store the $\mu$-independent quantities
\begin{subequations}
\label{eq:assembling}
\begin{alignat}{12}
\uuA{}^q_{n,ij} \ & = \ \ & & \uvarphi{}_i^T \uA^q\uvarphi{}_j, &\quad && \uutA{}^q_{n,lm} \ \ & = \ \ & & \uzeta{}_l^T \utA{}^q\uzeta{}_m, & \quad & q = 1,\dots,Q_a, \\[0.8ex]
\uuB{}_{n,kj} \ \ & = \ \ & & \upsi{}_k^T \uB\uvarphi{}_j, &\quad && \uug{}^q_{n,k} \ \ & = \ \ & & \upsi{}_k^T \ug^q, &\quad & q = 1,\dots,Q_g, \\
\uuf{}^q_{n,i} \ \ & = \ \  & & \uvarphi{}_i^T \uf^q, &\quad && \uutf{}^{q'}_{n,l} \ \ & = \ \  & & \uzeta{}_l^T \utf{}^{q'}, &\quad & q = 1,\dots,Q_f, \ q' = 1,\dots,\tQ_f.
\end{alignat}
\end{subequations}
for $i,j = 1,\dots,n_\sV$, $k=1,\dots,n_\sQ$, and $l,m=1,\dots,n_\sS$.  The dominant computational cost to compute (and store) the required quantities in (\ref{eq:assembling}) is then  $\cO(Q_a n_\sV^2 \mathcal{N}_\sV^{*}+ n_\sV n_\sQ \mathcal{N}_\sV^{*}\mathcal{N}_\sQ^{*}+Q_a n_\sS^2\mathcal{N}_\sQ^{*})$ (and $\cO(Q_a n_\sV^2+n_\sV n_\sQ + Q_a n_\sS^2)$).  

In the {\it online} stage, we then compute the summations in (\ref{eq:affine-primal-dual}) (at cost $\cO(Q_a n_\sV^2+Q_a n_\sS^2)$), and solve (\ref{eq:kkt-pr-rb}) as well as (\ref{eq:kkt-du-rb-sys}) at a cost that depends only on $n_\sV, n_\sQ$, $n_\sS$, and on the complexity of the parameter dependence (through $Q_a, Q_f, Q_g$), and is independent of the dimension of the finite element problem.   

We now turn to the {\it a posteriori} error estimation stage.  The required lower bound to the coercivity constant, $\alpha_{\rm LB}(\mu),$ in (\ref{eq:du-error-constant}) can be calculated using the (now) standard successive constraints method (SCM) proposed in \cite{HRS+2007} and further improved in \cite{HKC+2010}.  The offline-online calculation of the dual norm of the residual in $d_1$ of (\ref{eq:du-error-constant}) is an application of now standard RB techniques that can be found in, e.g., \cite{PR2007a}, \cite{RHP2008}.  Turning now to $d_2$ in (\ref{eq:du-error-constant}), we note that 
\begin{equation*}
\ang{s_n(\mu),\lambda_n^\pr(\mu)} = \textstyle\sum\limits_{i=1}^{n_\sQ} \sum\limits_{j=1}^{n_\sS} \uus{}_{n,j} (\mu)\uulambda{}_{n,i}^{\pr}(\mu) \upsi^T_i \uzeta_j.
\end{equation*}
We thus compute {\it offline} the product $\upsi_i^T\uzeta_j$ at cost $\cO(n_{\sQ}n_\sS \mathcal{N}_\sQ)$; in the {\it online} stage, we simply compute the sum at cost $\cO(n_{\sQ}n_\sS)$.  

In summary, the primal-dual approach presented here computes {\it fully online-efficient} approximations and associated {\it a posteriori}  error bounds.  In comparison with the primal-only approach, the primal-dual approach has the slight disadvantage that it requires the setup (offline) and solution (online) of an additional RB approximation problem for the slack variable $s$. The payoff, however, is in the {\it a posteriori} error estimation stage: whereas the primal-only method requires the use of nonlinear projections back into the FE space, the primal-dual approach does not.  The online cost for the former thus depends on the FE dimension $\mathcal{N}$, while that of the latter depends {\it only} on the RB dimension $n_\sV$, $n_\sQ$, and $n_\sS$.   

\section{Example: The Reduced-Basis Method for the Obstacle Problem}
\label{sec:example}

In Sec.~\ref{sec:rb}, we presented the framework for the RB approximation of variational inequalities of the first kind.  We now apply methods of Secs.~\ref{sec:primal} and \ref{sec:duality} to two model problems.  Model 1 is taken from \cite{HSW2012} and represents a 1D elastic rope over a rigid obstacle.  Model 2 is a 2D extension of Model 1 and  represents an elastic membrane below a rigid obstacle.  We describe each model problem in more detail below.  

\subsection{Problem Statement}

In this section, we describe the two model problems against which we shall test the performance of our proposed approach. 

\subsubsection{Model 1} 

First, we consider a one-dimensional problem  with domain $\Omega=(0,1)$, scalar parameter domain $\mathcal{D}=[0.001,0.01]$, and $Q_a=1$. The bilinear form $a(\cdot,\cdot;\mu):\sV\times\sV\rightarrow\mathbb{R}$ and bilinear form $b(\cdot,\cdot):\sV\times\sQ\rightarrow\mathbb{R}$ are defined as (see \cite{HSW2012}): for any $\mu \in \cD$, and for all $w,v \in \sV$ and $q \in \sQ$, 
$$
a(v,w;\mu) = \mu\int_{\Omega}v_xw_x\,dx,  \qquad 
b(v,q) = -q(v)=-\ang{q,v}.
$$
Hence, it follows that $B=-I$. For $h(x)=5x-10$, the linear form $f(\cdot) \in \sV'$ and $g(\cdot)\in\sQ'$ are defined as (again, see \cite{HSW2012}): for all $v \in \sV$ and $q \in \sQ$, 
$$
f(v)= -\int_{\Omega} v(x)dx, \qquad 
g(q) = \ang{g,q}=\textstyle\sum\limits_{i=1}^{\mathcal{N}}q_ih(x_i), \mbox{ with } q=\textstyle\sum\limits_{i=1}^{\mathcal{N}}q_i\phi_i.
$$
We impose homogeneous Dirichlet conditions on both boundaries.  We note that we take the space $\sQ$ as the dual space of $\sV=H^1_0(\Omega)$. Since $\sV$ is reflexive, we have $\sV'=\sQ$, and $\sQ'=\sV$. This model represents an elastic rope with different elasticity moduli and constant body force. Solution of the variational inequality thus finds the equilibrium condition that minimizes the potential energy subject to the constraint presented by the obstacle.   A sample solution for $\mu = 0.01$ is shown in Fig.~\ref{fig:ex1}(a). 

\subsubsection{Model 2}

We now introduce a second model problem which will allow us to thoroughly examine the performance of the proposed methods as the FE dimension $\cN$ increases.  We thus extend the one-dimensional example to two dimensions, and consider a problem with domain $\Omega=(0,1)\times (0,1)$, and scalar parameter domain $\mathcal{D}=[0.45,0.55]$. The bilinear form $a$ and $b$ are defined as in Model 1: for any $\mu \in \cD$, and for all $w,v \in \sV$ and $q \in \sQ$, 
$$
a(v,w;\mu) = \mu\int_{\Omega}\nabla v \cdot \nabla w \,d\Omega, \qquad
b(v,q) = q(v)=\ang{q,v}.
$$
We then define the linear forms $f$ and $g$ as: for all $v \in \sV$ and $q \in \sQ$, 
$$
f(v) = \int_{\Omega} v \,d\Omega, \quad \forall \ v\in\sV, \qquad
g(q) = 0.1 \textstyle\sum\limits_{i=1}^{\mathcal{N}}q_i, \mbox{ with } q=\textstyle\sum\limits_{i=1}^{\mathcal{N}}q_i\phi_i.
$$
Here, we again impose homogeneous Dirichlet conditions on both boundaries and $\sQ$ is the dual space of $\sV=H^1_0(\Omega)$. Since $\sV$ is reflexive, we again have $\sV'=\sQ$, and $\sQ'=\sV$. This model represents an elastic membrane  below a rigid obstacle acted on by a constant body force. A sample solution for $\mu = 0.5$ is shown in Fig.~\ref{fig:ex1}(b).

\begin{figure}[ht]
\begin{minipage}[ht]{0.49\linewidth}
\centerline{\includegraphics[height=5cm]{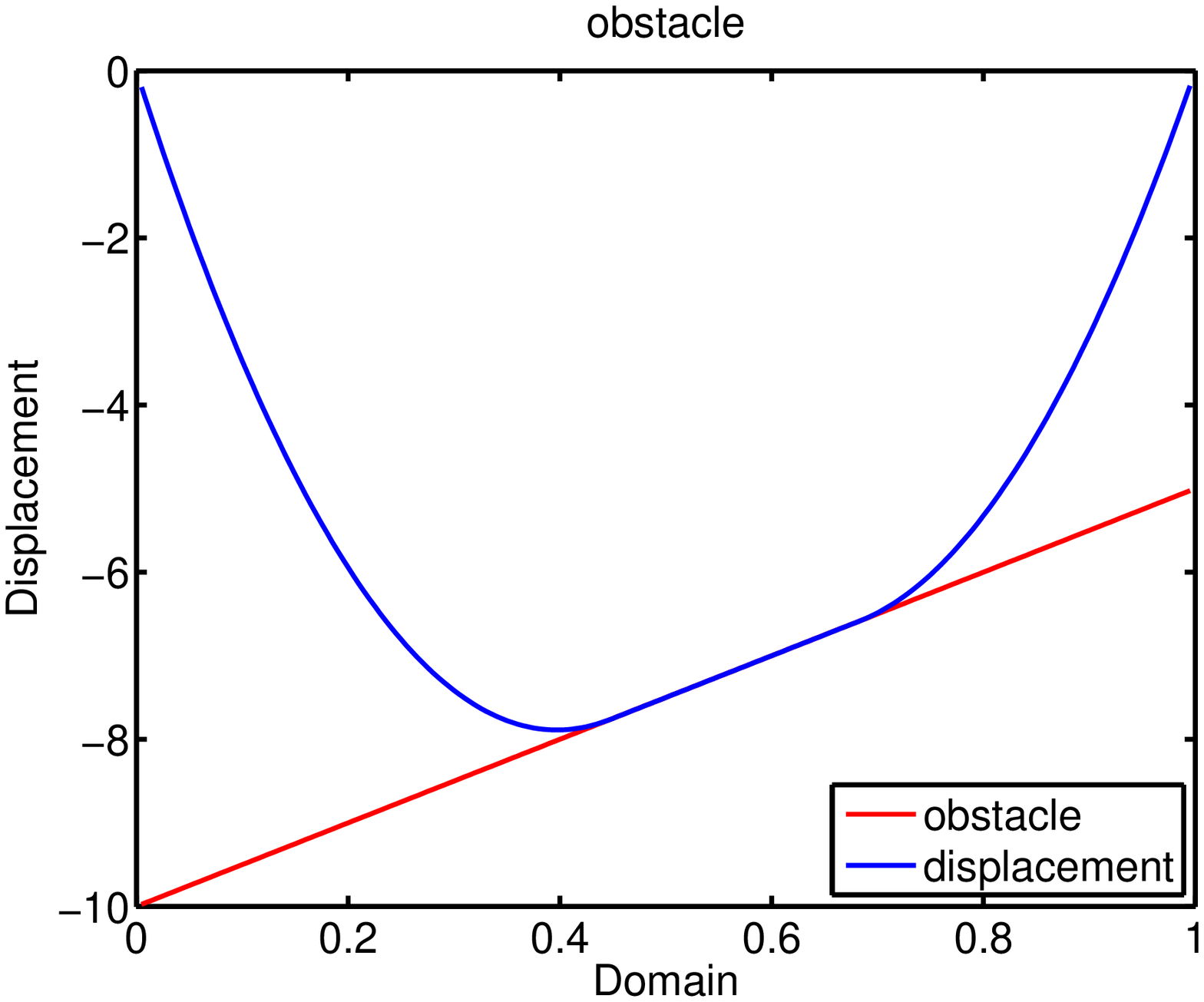}}
\centerline{\small (a)}
\end{minipage}
\begin{minipage}[ht]{0.49\linewidth}
\centerline{\includegraphics[height=5cm]{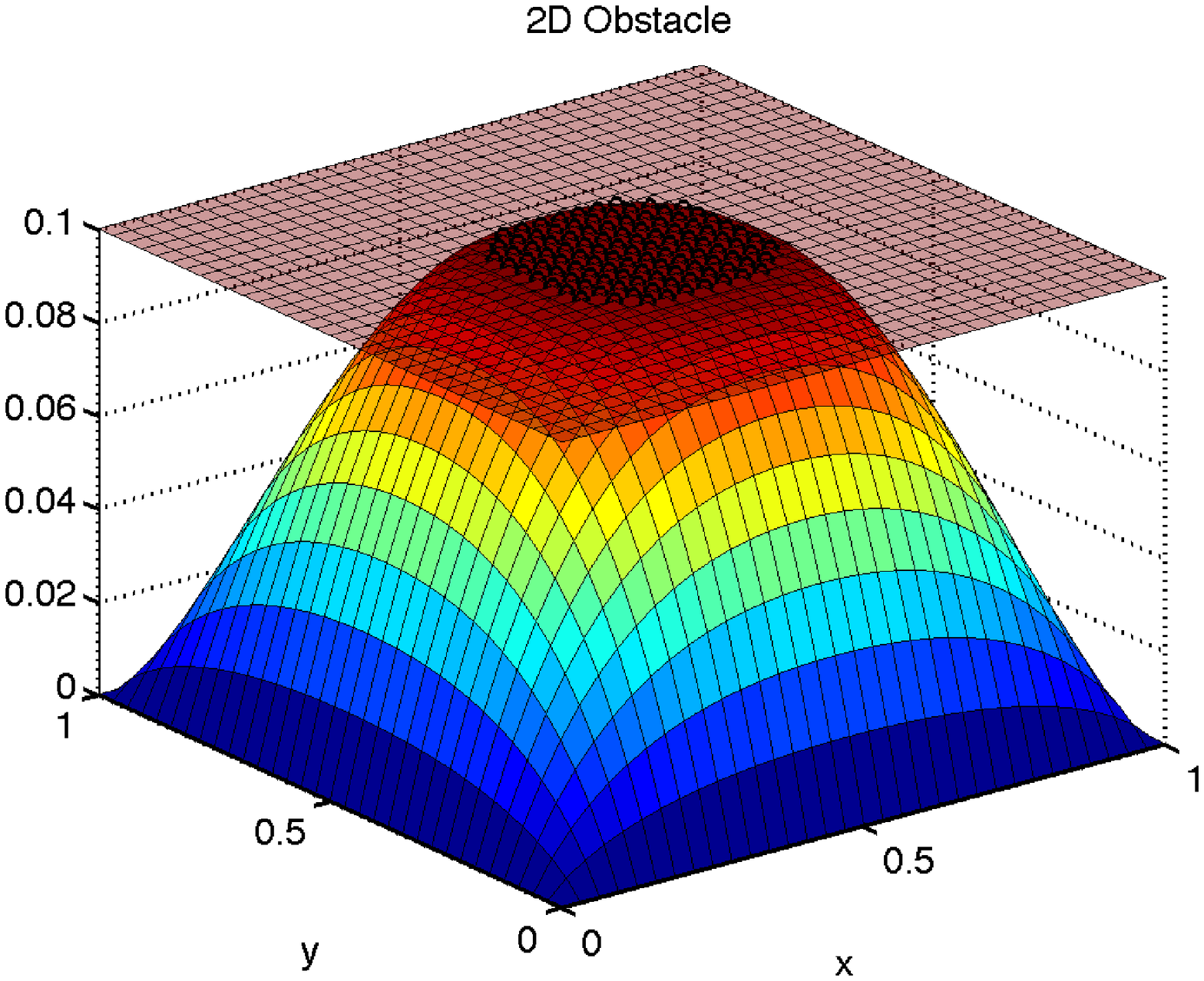}}
\centerline{\small (b)}
\end{minipage}
\caption{Sample solutions for (a) Model 1 with $\mu = 0.01$ and (b) Model 2 with $\mu = 0.5$.}
\label{fig:ex1}
\end{figure}

\subsection{Numerical Results}

In this section, we test the primal-only and primal-dual approaches described in Sections \ref{sec:primal} and \ref{sec:duality} using our two model problems.   In Model 1, we use a triangulation consisting of $200$ elements (i.e., segments) in the one-dimensional domain. In model 2, we use a triangulation consisting of $32\times 32$ elements in the two-dimensional domain. For both model problems, we use standard conforming first order nodal bases for $\sV$. For the basis functions of $\sQ$, we use the biorthogonal functions of the basis functions of $\sV$.

The numerical results for Model 1 are attained using the quadratic optimisation capabilities of M{\small ATLAB} \cite{MATLAB2013a}.  More specifically, we use interior point method through the built-in optimisation function {\tt quadprog}. The numerical results for Model 2 are obtained using the open-source software rbOOmit \cite{KP2011a}, an implementation of the RB framework within the C++ finite element  library libMesh \cite{KPSC2006}.  We now compare the performance of the two approaches presented in Sec.~\ref{sec:primal} and \ref{sec:duality}, focusing particularly on approximation accuracy, error bound sharpness, and computational efficiency.  

\subsubsection{Error bounds}

In order to reproduce the results of \cite{HSW2012} and compare therewith the performance of the proposed primal-dual approach, we follow the testing procedure described in \cite{HSW2012}.  We thus take the test sample set $\mathcal{F}$ as $250$ parameters uniformly distributed in the parameter domain, and the RB basis space as an equidistant sample of $n$ parameters from the parameter domain $\mathcal{D}$ . The RB space and test samples are constructed in the same way for Model 2.  Note that in both Models 1 and 2, $Q_a=Q_f=1$, we only need to include $A^{-1}f$ to ensure inf-sup stability (see \cite{HSW2012}). Hence, in both cases, $n_{\sup}=1$.  

We now compare the performance of the primal-only and primal-dual approaches.  We begin with the primal variable $u$ and present in Fig.~\ref{fig:num-u}(a) and (b) (for Models 1 and 2, respectively) the maximum relative error $\max_{\mu\in\mathcal{F}}(\|u(\mu)-u_n^{m}(\mu)\|_\sV/\|u(\mu)\|_\sV)$ and maximum relative error bound $\max_{\mu\in\mathcal{F}}(\Delta^{m}_n(\mu)/\|u(\mu)\|_\sV)$, for the primal-only approach (${m} = ``\pr"$) and the primal-dual approach (${m} = ``\pr,\du"$).  We note that in both model problems, the error in the RB approximation using the primal-only approach (shown using blue crosses) and the primal-dual approach (red crosses) almost coincide.  However, the primal-only error bound (blue circles)  does not replicate the convergence rate of the exact error and becomes increasingly pessimistic as $n$ increases.  The results for the primal-dual approach (red circles), on the other hand, are not only sharper, but mimics the true convergence rate of the approximation.  

We now turn to the dual variable $\lambda$.  We present in Fig.~\ref{fig:num-l}(a) and (b) (for Models 1 and 2, respectively) the maximum relative error $\max_{\mu\in\mathcal{F}}(\|\lambda(\mu)-\lambda_n^{\pr}(\mu)\|_\sQ/\|\lambda(\mu)\|_\sQ)$ and maximum relative error bound $\max_{\mu\in\mathcal{F}}(\Delta^{m}_n(\mu)/\|\lambda(\mu)\|_\sQ)$, for the primal-only approach (${m} = ``\pr"$) and the primal-dual approach (${m} = ``\pr,\du"$).  We recall that in both approaches, the same approximation, $\lambda_n^\pr(\mu)$ is used for the dual variable.  Thus, only one error curve (in black) is shown for both approaches.

Once again, the primal-only error bound (blue circles) is more pessimistic than the primal-dual error bound (red circles).  This behavior is consistent with the results for the primal variable, especially since the error bounds $\Delta_\lambda^\pr$ and $\Delta_\lambda^{\pr,\du}$ (see (\ref{eq:pr-bound-l}) and (\ref{eq:du-bound-l})) contain the error bound for the primal variable $\Delta_u^\pr$, $\Delta_u^{\pr,\du}$, respectively.  

The greatly improved performance (i.e. sharpness) of the primal-dual error bound can be attributed to the observations made at the ends of Sections \ref{sec:pr-a-post-error} and \ref{sec:dual-approximation}.  The strictly feasible approximations $u_n^\du$ enabled the computation of considerably sharper error bounds without necessitating the use of a nonlinear projection $\Pi$ and at the cost only of an additional RB approximation problem.   However, the latter two considerations require a careful comparison of the computational cost of both approaches.  We thus consider next the online efficiency of the two methods.  

\begin{figure}[ht]
\begin{minipage}[ht]{0.49\linewidth}
\centerline{\includegraphics[width=0.95\textwidth]{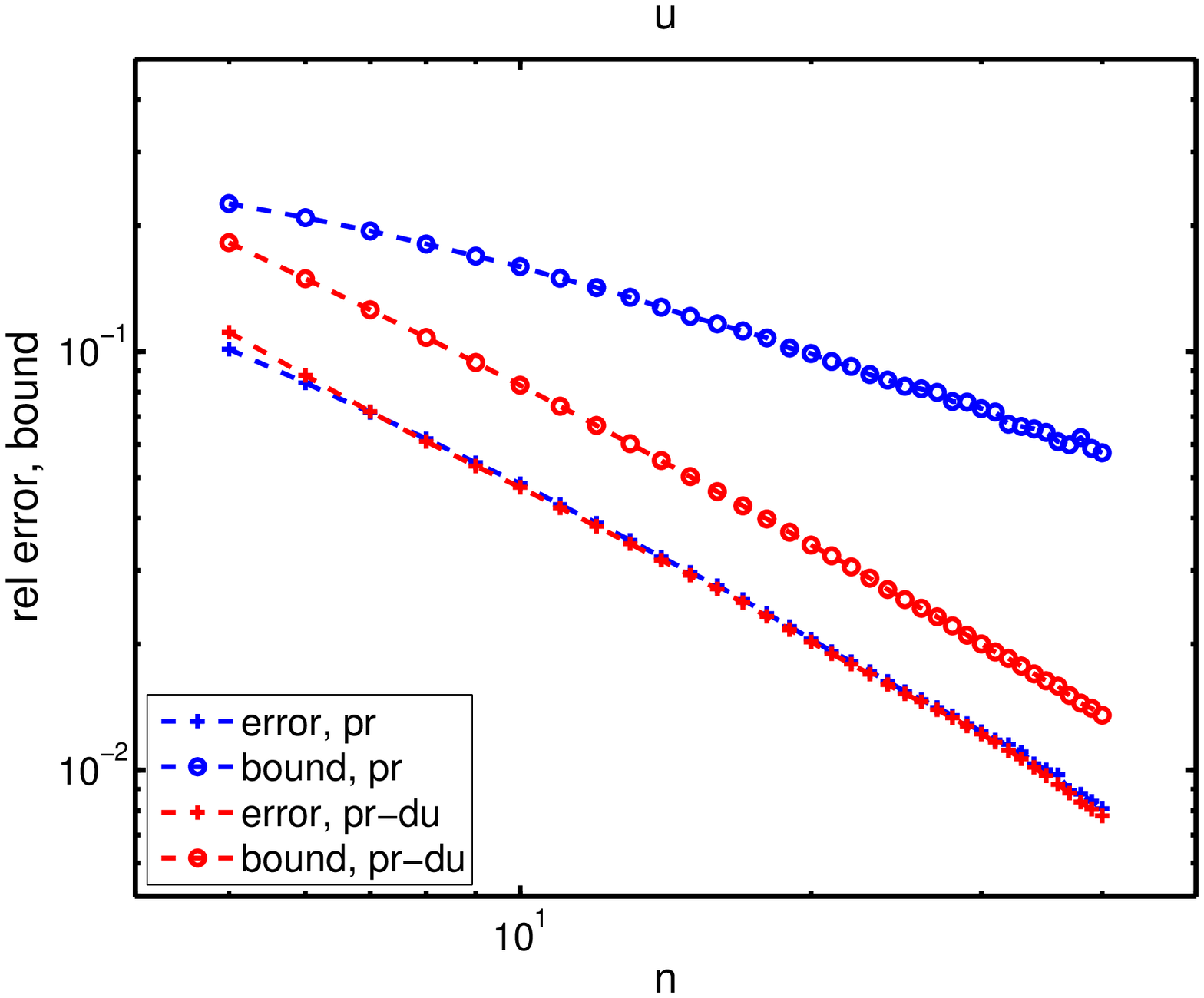}}
\vspace{-0.1cm} 
\centerline{\small (a) Model 1}
\end{minipage}
\begin{minipage}[ht]{0.49\linewidth}
\centerline{\includegraphics[width=0.95\textwidth]{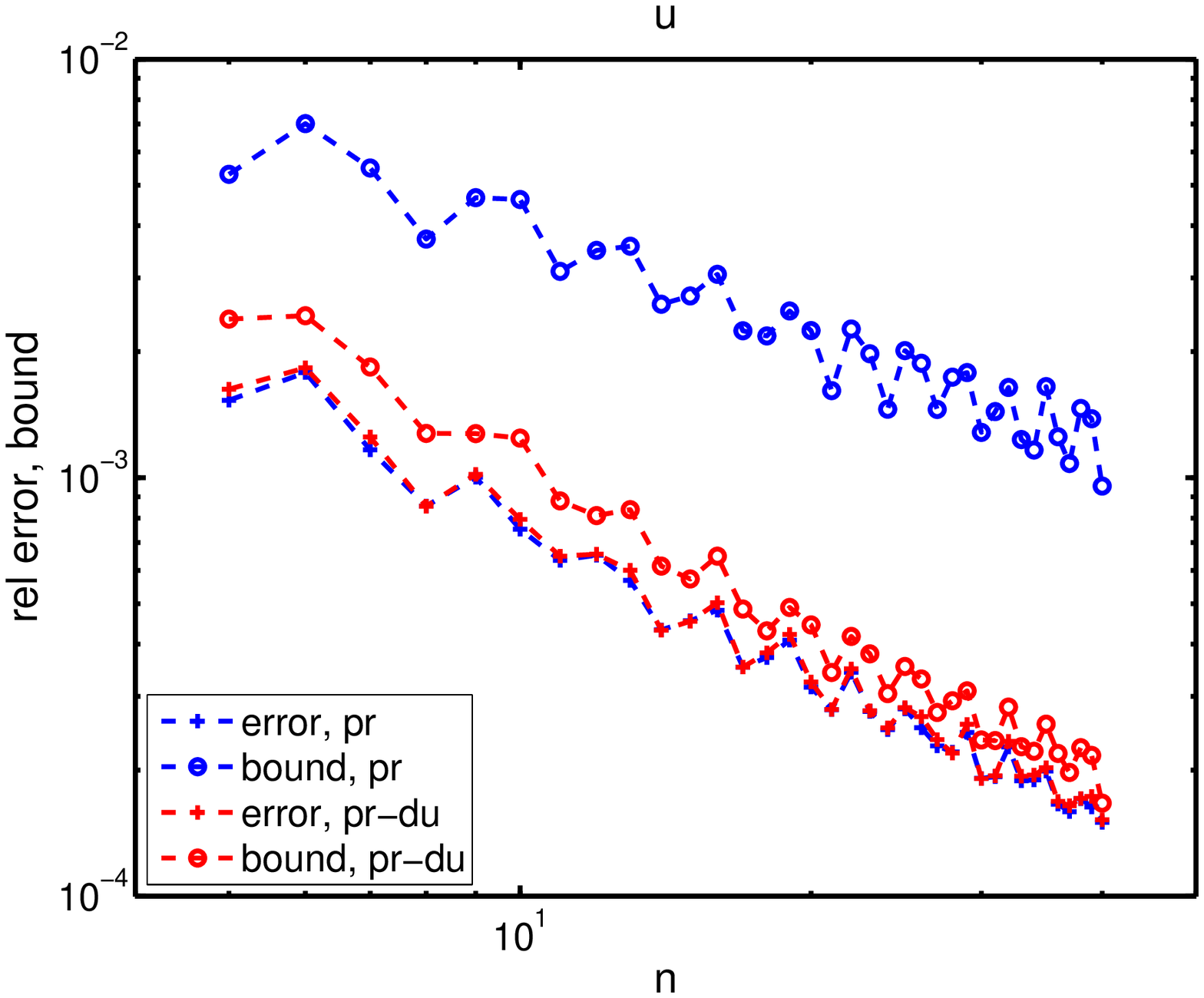}}
\vspace{-0.1cm} 
\centerline{\small (b) Model 2}
\end{minipage}
\caption{Maximum relative error and {\it a posteriori} error bound  for $u$ obtained using the primal-only approach and primal-dual approach.}
\label{fig:num-u}
\end{figure}
\begin{figure}[ht]
\begin{minipage}[ht]{0.49\linewidth}
\centerline{\includegraphics[width=0.95\textwidth]{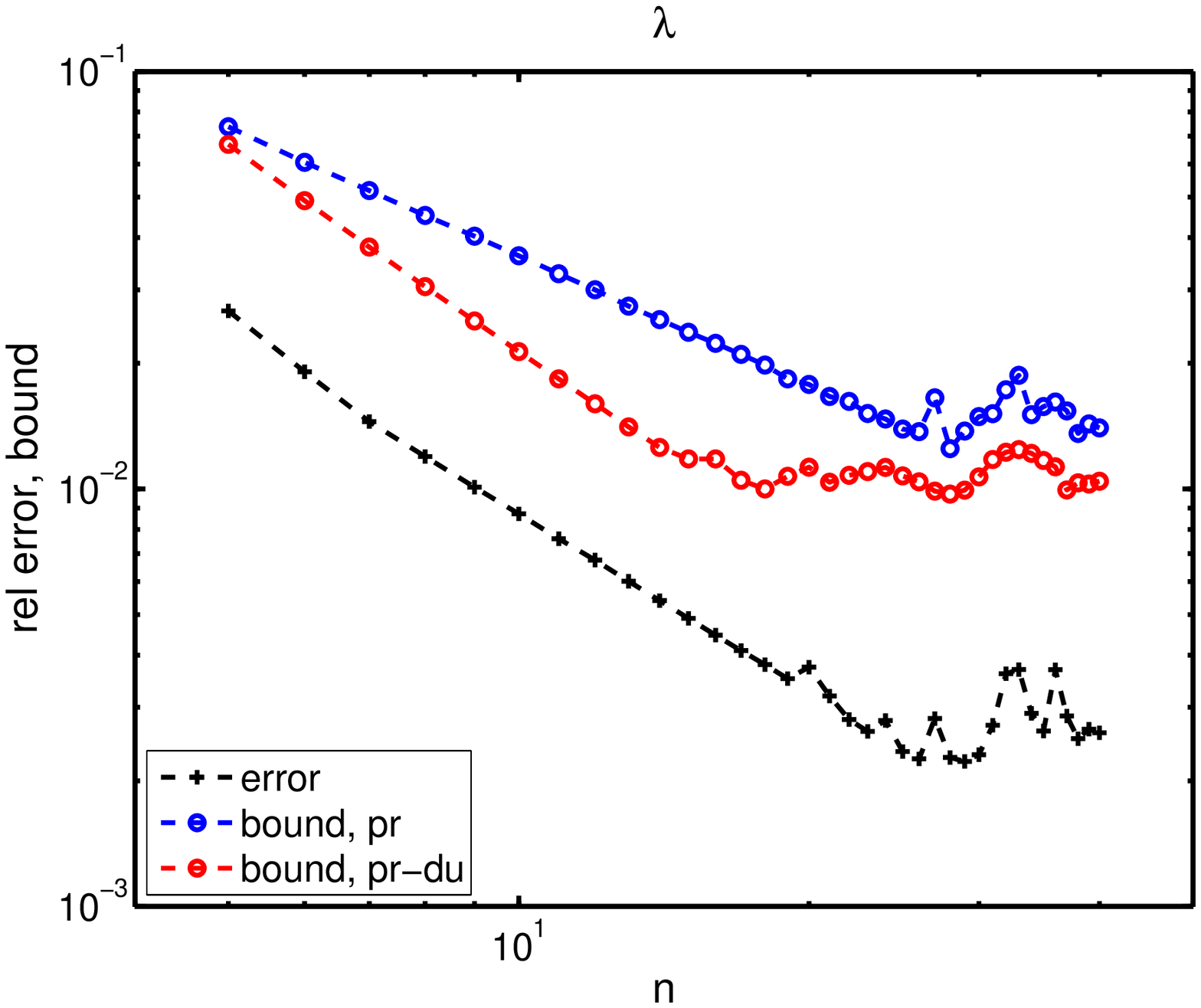}}
\vspace{-0.1cm} 
\centerline{\small (a) Model 1}
\end{minipage}
\begin{minipage}[ht]{0.49\linewidth}
\centerline{\includegraphics[width=0.95\textwidth]{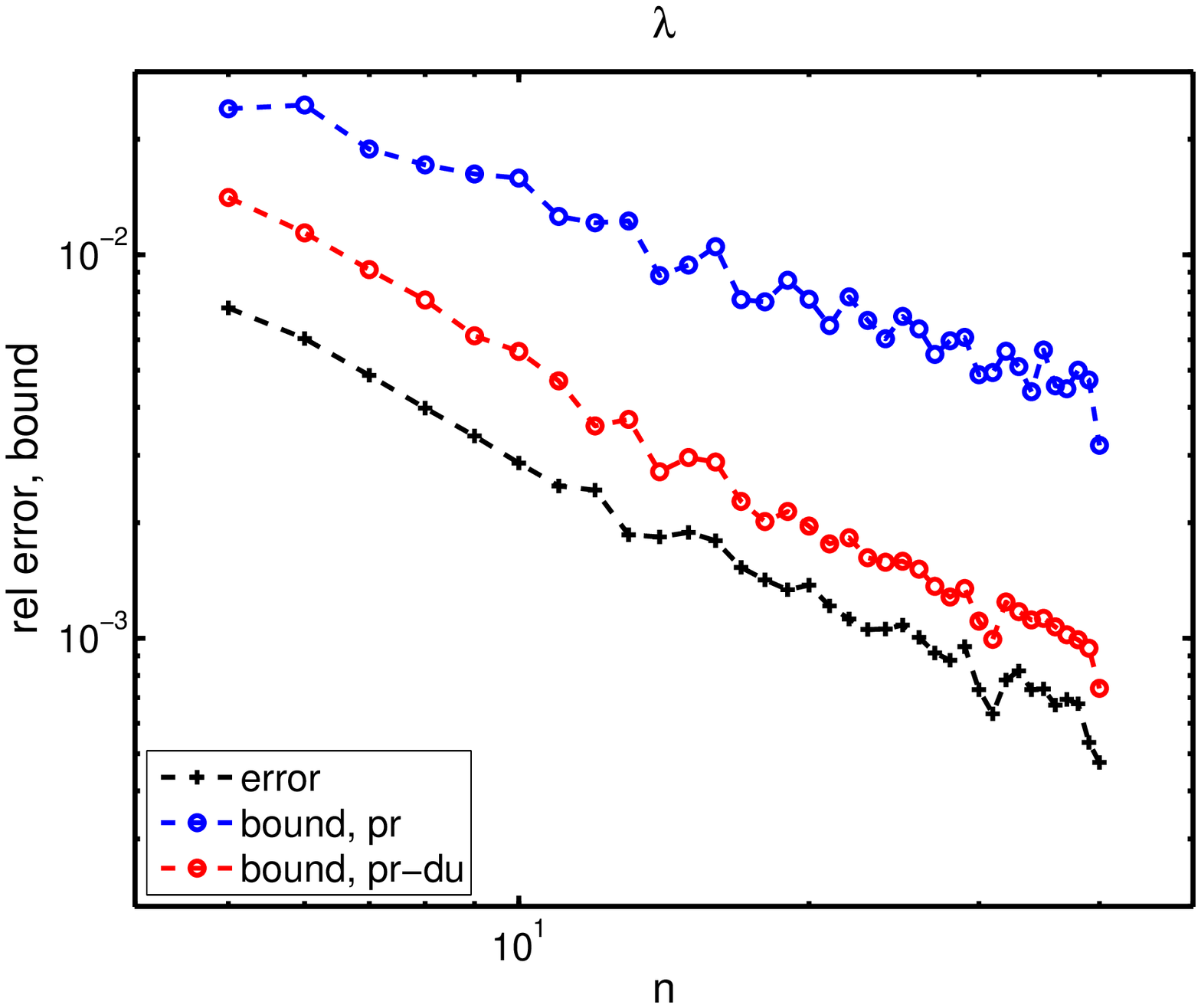}}
\vspace{-0.1cm} 
\centerline{\small (b) Model 2}
\end{minipage}
\caption{Maximum relative error and {\it a posteriori} error bound  for $\lambda$ obtained using the primal-only approach and primal-dual approach.}
\label{fig:num-l}
\end{figure}

\subsubsection{Online efficiency}

We present in Fig.~\ref{fig:num-time} the maximum relative error bound for the two approaches plotted against the average total online computational time for a single evaluation of the approximation and error bound.  Figs.~\ref{fig:num-time}(a) and (b) show results for Model 1, and we observe that for a given (commonly attainable) error, the primal-dual approach entails a higher online computational cost than the primal-only approach.  This is due to the additional reduced-basis problem required by the primal-dual approach, the cost of which does not offer any computational savings for the case when $\cN$ is small (as is the case in this simple one-dimensional problem).  

In contrast, the results for Model 2 show that for a given accuracy, the online cost for the primal-dual approach is lower than that of the primal-only approach.  In this example, the $\cN$-dependent online cost for the primal-only approach is high enough so as to justify the additional cost of the dual reduced problem.  We can thus reasonably expect that computational advantages of the primal-dual approach will become even more pronounced as $\cN$ increases, for example, in the case of three-dimensional problems.  Furthermore, it can be clearly seen in Figs.~\ref{fig:num-time}(c) and (d) (and partly in Fig.~\ref{fig:num-time}(a)) that the superior sharpness of the primal-dual error bounds allow us to achieve greater accuracy than in the primal-only approach.  

\begin{figure}[ht]
\begin{minipage}[ht]{0.49\linewidth}
\centerline{\includegraphics[width=0.95\textwidth]{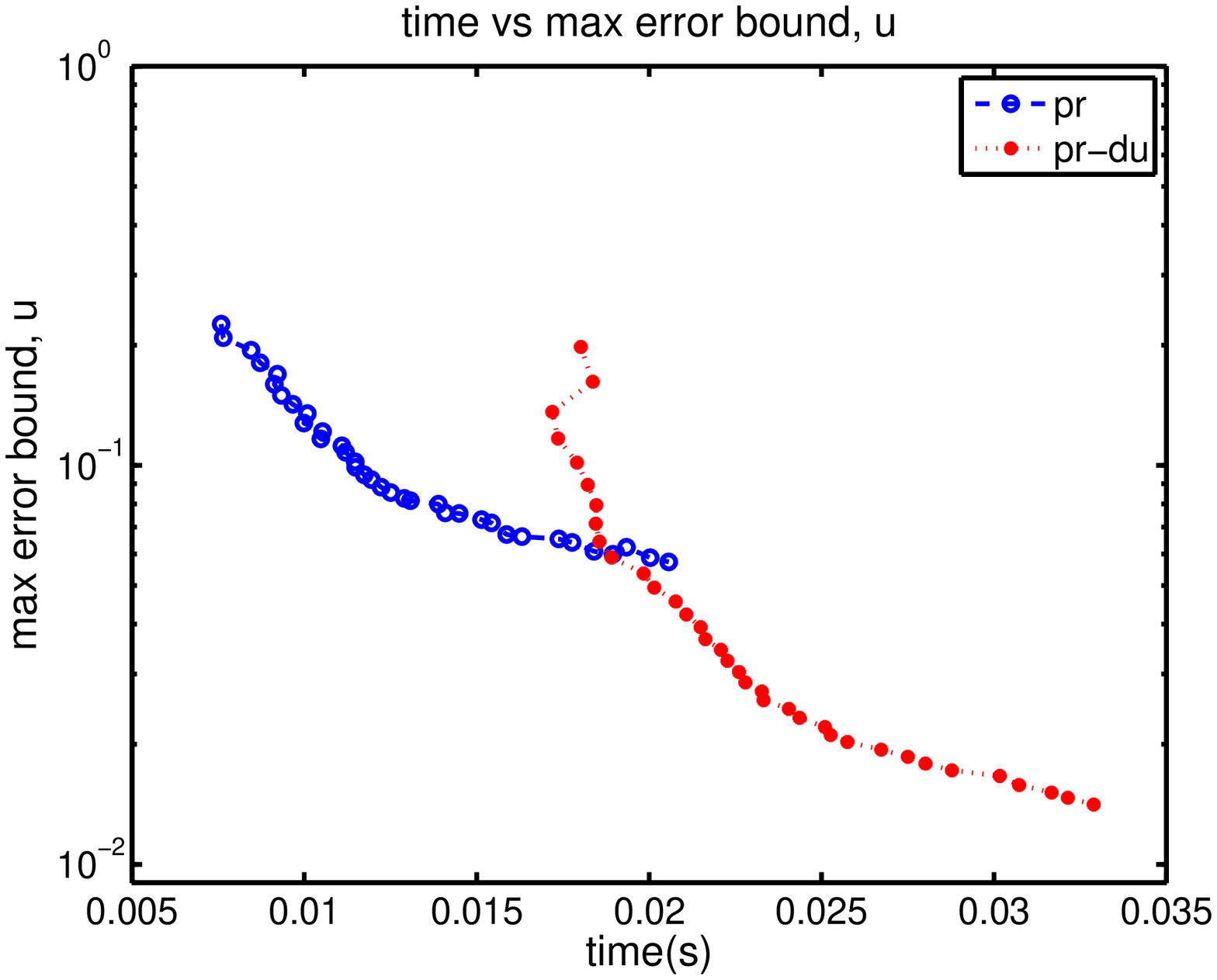}}
\vspace{-0.1cm} 
\centerline{\small (a)}
\end{minipage}
\begin{minipage}[ht]{0.49\linewidth}
 \centerline{\includegraphics[width=0.95\textwidth]{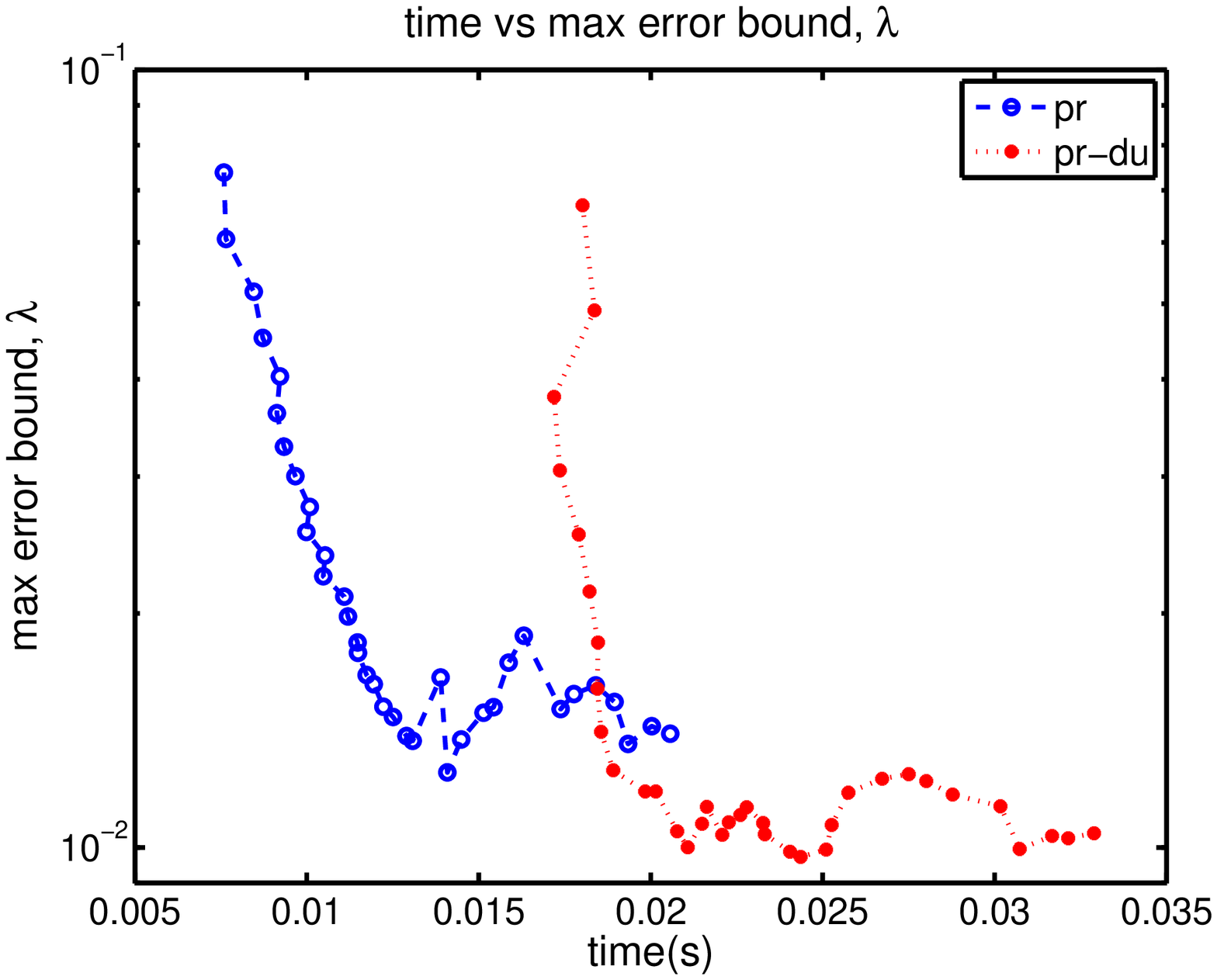}}
\vspace{-0.1cm} 
 \centerline{\small (b)}
\end{minipage}
\begin{minipage}[ht]{0.49\linewidth}
\centerline{\includegraphics[width=0.95\textwidth]{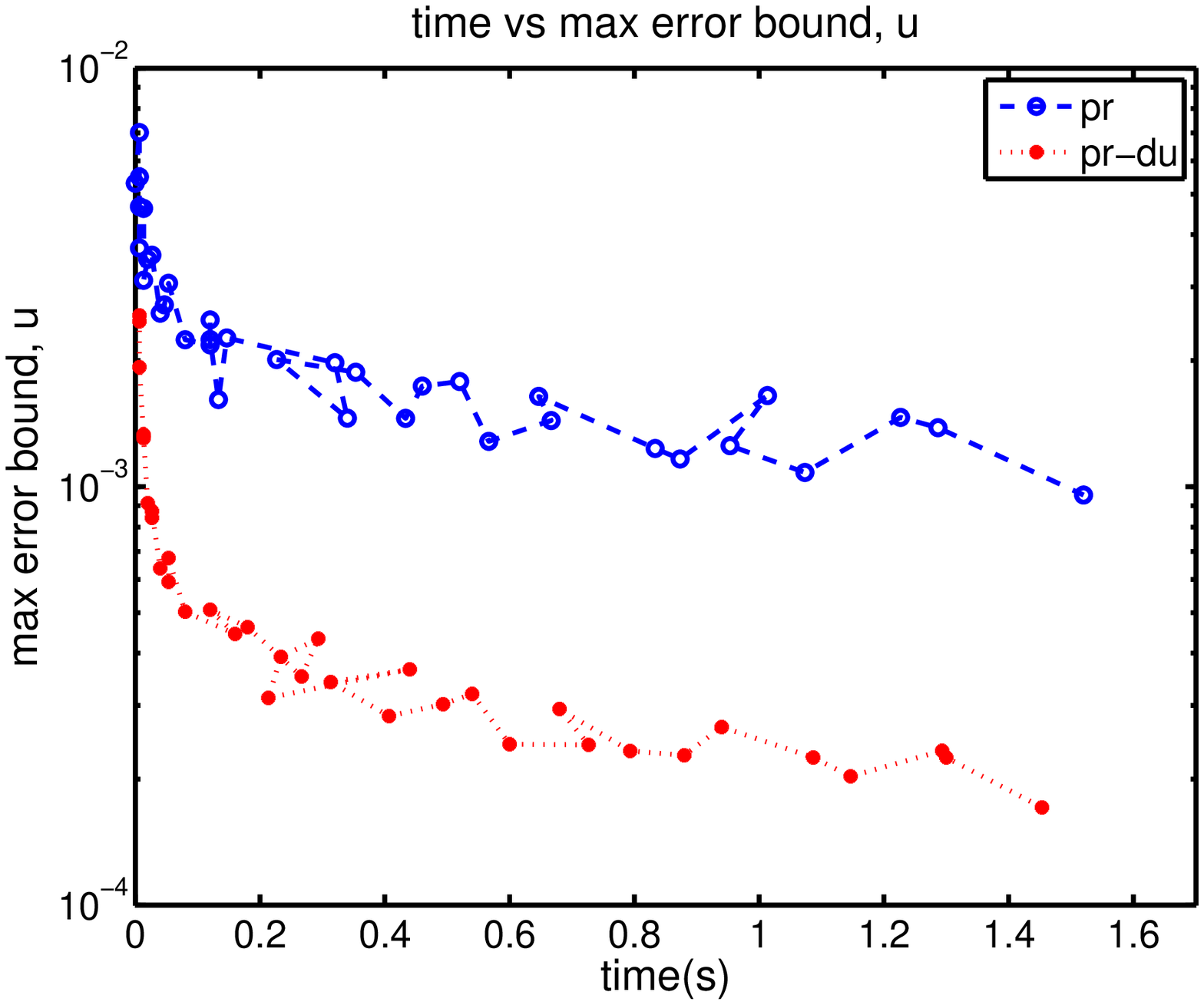}}
\vspace{-0.1cm} 
\centerline{\small (c)}
\end{minipage}
\begin{minipage}[ht]{0.49\linewidth}
 \centerline{\includegraphics[width=0.95\textwidth]{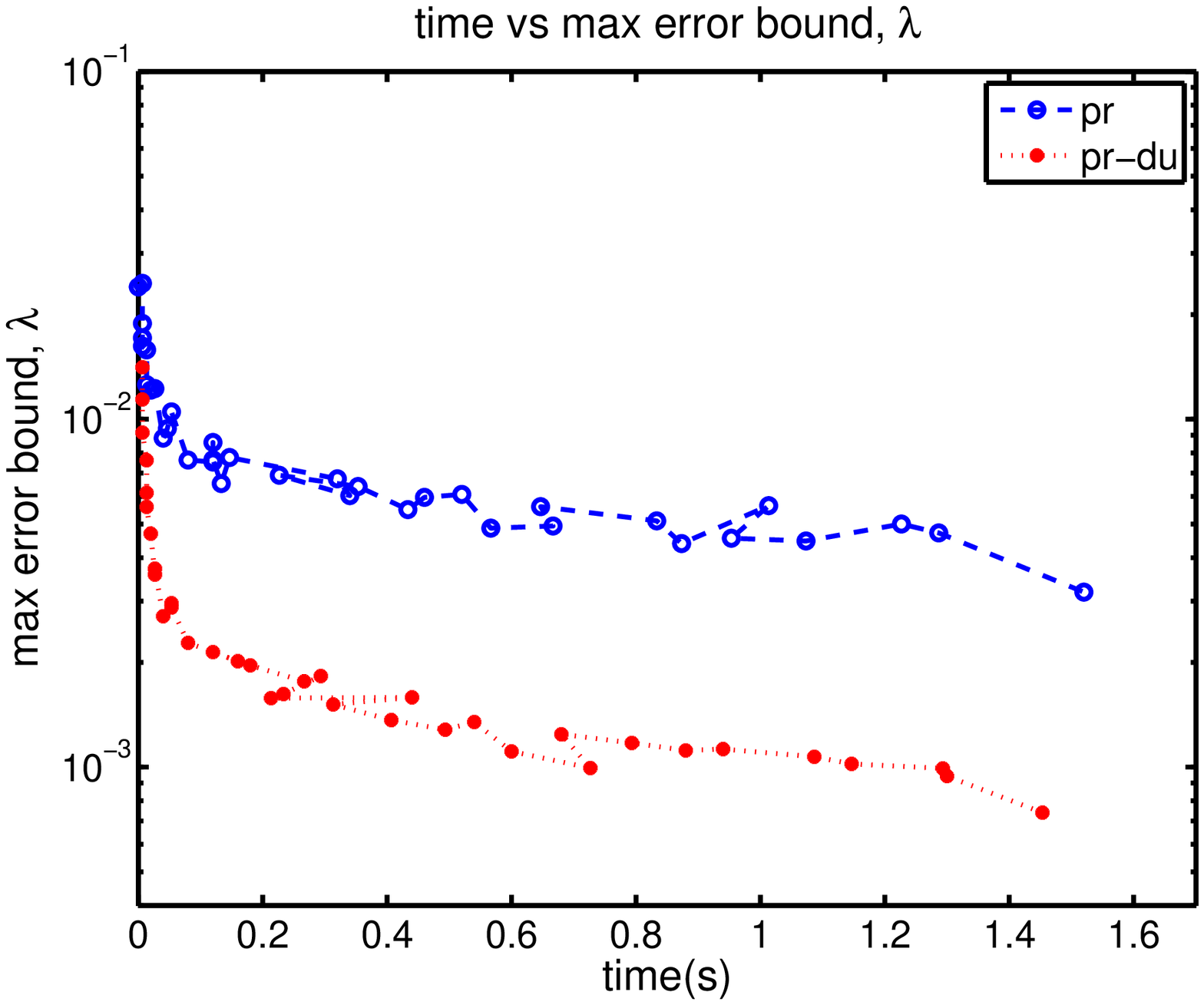}}
\vspace{-0.1cm} 
 \centerline{\small (d)}
\end{minipage}
\caption{Comparison of online computational times for (a) $u$ and (b) $\lambda$ for Model 1, and (c) $u$ and (d) $\lambda$ for Model 2.}
\label{fig:num-time}
\end{figure}

\section{Summary and Perspectives}
We proposed a primal-dual approach for computing approximations and associated {\it a posteriori} error bounds to solutions of variational inequalities of the first kind.  The proposed approach utilizes an additional approximation problem for the slack variable in order to obtain strictly feasible (primal-dual) approximations.  This in turn enables the derivation of sharp {\it a posteriori} error bounds which closely mimic the convergence rate of the corresponding approximation.  Applied to the reduced basis method, the approach further allows a {\it full} offline-online computational decomposition in which the online cost to compute the error bounds is completely independent of the dimension $\cN$ of the full problem.  Numerical results illustrate the superiority of the approach in cases where the dimension $\cN$ of the full problem is high.  Future work will focus on ({\it i\/}) the application of the method to more complex problems, particularly to elastic contact, and ({\it ii\/}) the development of appropriate greedy strategies for the systematic selection of basis functions.  

\section*{Acknowledgments} We would like to thank Prof.~Michael Herty and Mark K\"archer of RWTH Aachen University for the helpful discussions and comments.  

\appendix
\bibliography{All_References}
\bibliographystyle{siam}

\end{document}